\documentclass{article}%
\usepackage{graphicx}
\usepackage{amsmath}
\usepackage{amsfonts}
\usepackage{amssymb}
\usepackage{setspace}
\usepackage[hmargin=3.5cm,vmargin=3.5cm]{geometry}%
\setcounter{MaxMatrixCols}{30}
\providecommand{\U}[1]{\protect\rule{.1in}{.1in}}
\providecommand{\U}[1]{\protect\rule{.1in}{.1in}}
\providecommand{\U}[1]{\protect\rule{.1in}{.1in}}
\providecommand{\U}[1]{\protect\rule{.1in}{.1in}}
\providecommand{\U}[1]{\protect\rule{.1in}{.1in}}
\providecommand{\U}[1]{\protect\rule{.1in}{.1in}}
\providecommand{\U}[1]{\protect\rule{.1in}{.1in}}
\providecommand{\U}[1]{\protect\rule{.1in}{.1in}}
\providecommand{\U}[1]{\protect\rule{.1in}{.1in}}
\providecommand{\U}[1]{\protect\rule{.1in}{.1in}}
\providecommand{\U}[1]{\protect\rule{.1in}{.1in}}
\providecommand{\U}[1]{\protect\rule{.1in}{.1in}}
\providecommand{\U}[1]{\protect\rule{.1in}{.1in}}
\providecommand{\U}[1]{\protect\rule{.1in}{.1in}}
\providecommand{\U}[1]{\protect\rule{.1in}{.1in}}
\providecommand{\U}[1]{\protect\rule{.1in}{.1in}}
\providecommand{\U}[1]{\protect\rule{.1in}{.1in}}
\providecommand{\U}[1]{\protect\rule{.1in}{.1in}}
\providecommand{\U}[1]{\protect\rule{.1in}{.1in}}
\providecommand{\U}[1]{\protect\rule{.1in}{.1in}}
\providecommand{\U}[1]{\protect\rule{.1in}{.1in}}
\providecommand{\U}[1]{\protect\rule{.1in}{.1in}}
\providecommand{\U}[1]{\protect\rule{.1in}{.1in}}
\providecommand{\U}[1]{\protect\rule{.1in}{.1in}}
\providecommand{\U}[1]{\protect\rule{.1in}{.1in}}
\providecommand{\U}[1]{\protect\rule{.1in}{.1in}}
\providecommand{\U}[1]{\protect\rule{.1in}{.1in}}
\providecommand{\U}[1]{\protect\rule{.1in}{.1in}}
\providecommand{\U}[1]{\protect\rule{.1in}{.1in}}
\providecommand{\U}[1]{\protect\rule{.1in}{.1in}}
\providecommand{\U}[1]{\protect\rule{.1in}{.1in}}
\providecommand{\U}[1]{\protect\rule{.1in}{.1in}}
\providecommand{\U}[1]{\protect\rule{.1in}{.1in}}
\providecommand{\U}[1]{\protect\rule{.1in}{.1in}}
\providecommand{\U}[1]{\protect\rule{.1in}{.1in}}
\providecommand{\U}[1]{\protect\rule{.1in}{.1in}}
\providecommand{\U}[1]{\protect\rule{.1in}{.1in}}
\providecommand{\U}[1]{\protect\rule{.1in}{.1in}}
\providecommand{\U}[1]{\protect\rule{.1in}{.1in}}
\providecommand{\U}[1]{\protect\rule{.1in}{.1in}}

\setcounter{page}{1}
\setlength{\textheight}{21.6cm}
\setlength{\textwidth}{14cm}
\setlength{\oddsidemargin}{1cm}
\setlength{\evensidemargin}{1cm}
\pagestyle{myheadings}
\thispagestyle{empty}
\newtheorem{theorem}{Theorem}
{}

{}
\newtheorem{notation}{Notation}

\newtheorem{proposition}{Proposition}
\newtheorem{remark}{Remark}

\newenvironment{proof}[1][Proof]{\textbf{#1.} }{\ \rule{0.5em}{0.5em}}

\oddsidemargin 1.0cm \evensidemargin 1.0cm
\voffset -1cm
\topmargin 0.1cm
\headheight 0.5cm
\headsep 1.5cm
\begin{document}

\title{On a Class of Non-self-adjoint Multidimensional Periodic Schrodinger Operators }
\author{O. A. Veliev\\{\small Depart. of Math., Dogus University, Ac\i badem, Kadik\"{o}y, \ }\\{\small Istanbul, Turkey.}\ {\small e-mail: oveliev@dogus.edu.tr}}
\date{}
\maketitle

\begin{abstract}
We investigate the multidimensional Schrodinger operator $L(q)$ in
$L_{2}\left(  \mathbb{R}^{d}\right)  \ (d\geq2)$ with complex-valued periodic,
with respect to a lattice $\Omega,$ potential \ $q$ when the Fourier
coefficients $q_{\gamma}$ of $q$ with respect to the orthogonal system
$\{e^{i\left\langle \gamma x\right\rangle }:\gamma\in\Gamma\}$ vanish if
$\gamma$ belong to a half-space, where $\Gamma$ is the lattice dual to
$\Omega.$ We prove that the Bloch eigenvalues are $\mid\gamma+t\mid^{2}$ for
$\gamma\in\Gamma,$ where $t$ is a quasimomentum and find explicit formulas for
\ the Bloch functions. It implies that the Fermi surfaces of $L(q)$ and $L(0)$
are the same. The considered set of operators includes a large class of PT
symmetric operators used in the PT symmetric quantum theory.

Key Words: Periodic Schrodinger operator, Bloch eigenvalues, Bloch function,
Fermi surfaces. PT symmetric operators.

AMS Mathematics Subject Classification: 47F05, 35P15, 35J10, 34L20.

\end{abstract}

\section{Introduction and Preliminary Facts}

We consider the Schrodinger operator $L(q)$ generated in $L_{2}(\mathbb{R}%
^{d})$ by the expression%
\begin{equation}
-\Delta\Psi+q\Psi,
\end{equation}
where $d\geq2$ and $q$ is a periodic, relative to a lattice $\Omega$,
potential belonging to the class $S$ of the complex-valued function defined as
follows. Let
\[
\Gamma:=\left\{  \gamma\in\mathbb{R}^{d}:\left\langle \gamma,\omega
\right\rangle \in2\pi\mathbb{Z}\text{, }\forall\omega\in\Omega\right\}
\]
be the lattice dual to $\Omega$, where $\mathbb{Z}$ is the set of all integers
and $\left\langle \cdot,\cdot\right\rangle $ is the inner product in
$\mathbb{R}^{d}.$ Let $v_{1},v_{2},...,v_{d}$ be any generator of the
reciprocal lattice $\Gamma,$ that is,
\[
\Gamma=\left\{  n_{1}v_{1}+n_{2}v_{2}+...+n_{d}v_{d}:\text{ }n_{1}%
\in\mathbb{Z},\text{ }n_{2}\in\mathbb{Z},...,n_{d}\in\mathbb{Z}\right\}  .
\]
Divide the lattice $\Gamma$ into three parts $\Gamma(k),$ $\Gamma(k+)$ and
$\Gamma(k-),$ where $\Gamma(k)$ is the sublatice of $\Gamma$ generated by
$\left\{  v_{1},v_{2},...,v_{d}\right\}  \backslash\left\{  v_{k}\right\}  $
and
\begin{equation}
\Gamma(k\pm)=\left\{  u\pm nv_{k}:u\in\Gamma(k),n\in\mathbb{N}\right\}
,\text{ }\mathbb{N=}\left\{  1,2,...\right\}  .
\end{equation}
Denote by $S(k\pm)$ the set of potentials $q$ whose Fourier decompositions
have the form
\begin{equation}
q(x)=\sum_{\gamma\in\Gamma(k\pm)}q_{\gamma}e^{i\left\langle \gamma
,x\right\rangle }%
\end{equation}
and the Fourier coefficients $q_{\gamma}$ satisfy the following inequality
\begin{equation}
\sum_{\gamma\in\Gamma(k\pm)}\mid q_{\gamma}\mid=M<\infty,
\end{equation}
where $q_{\gamma}=(q,e^{i\left\langle \gamma,x\right\rangle }),$ $\left(
\cdot,\cdot\right)  $ is the inner product in $L_{2}(F)$ and $F:=\mathbb{R}%
^{d}/\Omega$ is the fundamental domain (primitive cell) of the lattice
$\Omega$. Without loss of generality we assume that the measure $\mu(F)$ of
$F$ is $1$. Define $S$ by
\begin{equation}
S=\cup_{k=1}^{d}\left(  S(k+)\cup S(k-)\right)  .
\end{equation}
The operator $L(q)$ for each nonzero $q\in S$ is non-self-adjoint. However
$\left\{  L(q):q\in S\right\}  $ contains a large class $\left\{  L(q):q\in
S,\text{ }q_{\gamma}\in\mathbb{R}\text{, }\forall\gamma\in\Gamma\right\}  $ of
PT symmetric operators which are important in the PT symmetric quantum theory
(see e.g. [1]).

Let $L_{t}(q)$ be the operator generated in $L_{2}(F)$ by (1) and the
quasiperiodic conditions%
\begin{equation}
\Psi(x+\omega)=e^{i\left\langle t,\omega\right\rangle }\Psi(x),\ \forall
\omega\in\Omega,
\end{equation}
where $t\in F^{\star}:=\mathbb{R}^{d}/\Gamma$. It is well-known that the
spectrum of $L_{t}(q)$ consists of the eigenvalues $\Lambda_{1}(t),$
$\Lambda_{2}(t),....$ which are called Bloch eigenvalues of $L(q)$. The
eigenfunction $\Psi_{N,t}(x)$ of $L_{t}(q)$ corresponding to the eigenvalue
$\Lambda_{N}(t)$ is known as Bloch function of $L(q)$:%
\begin{equation}
L_{t}(q)\Psi_{N,t}(x)=\Lambda_{N}(t)\Psi_{N,t}(x).
\end{equation}
In the case $q=0$ the eigenvalues and normalized eigenfunctions of $L_{t}(q)$
are $\mid\gamma+t\mid^{2}$ and $e^{i\left\langle \gamma+t,x\right\rangle }$
for $\gamma\in\Gamma$:%
\begin{equation}
\text{ }L_{t}(0)e^{i\left\langle \gamma+t,x\right\rangle }=\mid\gamma
+t\mid^{2}e^{i\left\langle \gamma+t,x\right\rangle }.
\end{equation}

The one dimensional case were considered in detail. Gasymov [5] proved the
following results for the operator $L(q):=-\frac{d^{2}}{dx^{2}}+q$ with the
potential $q$ of the form \
\begin{equation}
q(x)=%
{\textstyle\sum\limits_{n=1}^{\infty}}
q_{n}e^{inx},\text{ }%
\end{equation}
when $%
{\textstyle\sum}
\mid q_{n}\mid<\infty.$

\textbf{Result 1:} The spectrum $\sigma(L(q))$ of $L(q)$ is $[0,\infty).$
There may be the spectral singularities on the spectrum which must coincide
with numbers of the form $(\frac{n}{2})^{2}.$

\textbf{Result 2:} The equation
\begin{equation}
-y^{^{\prime\prime}}(x)+q(x)y(x)=\mu^{2}y(x)
\end{equation}
has the Floquet solution of the form
\[
f(x,\mu)=e^{i\mu x}(1+%
{\textstyle\sum\limits_{n=1}^{\infty}}
\frac{1}{n+2\mu}%
{\textstyle\sum\limits_{\alpha=n}^{\infty}}
v_{n,\alpha}e^{i\alpha x}),
\]
where the following series converge
\[%
{\textstyle\sum\limits_{n=1}^{\infty}}
\frac{1}{n}%
{\textstyle\sum\limits_{\alpha=n+1}^{\infty}}
\alpha(\alpha-n)\mid v_{n,\alpha}\mid,\text{ }%
{\textstyle\sum\limits_{n=1}^{\infty}}
n\mid v_{n,\alpha}\mid.
\]

\textbf{Result 3:} By the Floquet solutions a spectral expansion was constructed.

\textbf{Result 4:} It was shown that the Wronskian of the Floquet solutions%
\[
f_{n}(x):=\lim_{\mu\rightarrow\frac{n}{2}}(n-2\mu)f(x,-\mu)
\]
and $f(x,\frac{n}{2})$ is equal to zero and $f_{n}(x)=s_{n}f(x,\frac{n}{2}).$
It was proved that from $\{s_{n}\}$ one can effectively reconstruct
$\{q_{n}\}$.

Guillemin and Uribe [7,8] investigated the boundary value problem (bvp)
generated on $[0,2\pi]$ by (10) and the periodic boundary conditions when
$q\in Q_{2}^{+},$ where $Q_{2}^{+}$ is the set of $q\in L_{2}[0,2\pi]$ of the
form (9). It was proved that the eigenvalues of this bvp are $n^{2}$ for
$n\in\mathbb{Z}$ and the root functions (eigenfunction and associated
functions) were studied. Moreover, the inverse method applied to Hill's
equation was developed in the class of Hardy potentials and its applications
to the N-soliton solutions of KdV was considered. For $L(q)$ with the
potential $q\in Q_{2}^{+}$ the inverse spectral problem was investigated by
Pastur and Tkachenko [10] and the alternative proofs of the equality
$\sigma(L(q))=[0,\infty)$ were provided by Shin [11], Carlson [2] and
Christiansen [3]. In the case $q(x)=Ae^{2\pi irx},$ where $A\in\mathbb{C}$ and
$r\in\mathbb{Z}$, the periodic and antiperiodic bvp was investigated in detail
by Kerimov [9]. Several new and interesting from the point of view of
physicists observations about $L(q)$ were made by T. Curtright, L. Mezincescu [4].

In the paper [12], we proved that if
\begin{equation}
q\in L_{1}[0,1],\text{ }q(x+1)=q(x),\text{ }q_{n}=0,\text{ }\forall
n=0,-1,-2,...,
\end{equation}
where $q_{n}=(q,e^{i2\pi nx})$ and $(\cdot,\cdot)$ is the inner product in
$L_{2}[0,1],$ then $\sigma(L(q))=[0,\infty)$ and $\sigma(L_{t}(q))=\{(2\pi
n+t)^{2}:n\in\mathbb{Z}\}$ for all $t\in\mathbb{C},$ where $L_{t}(q)$ is the
operator generated in $L_{2}[0,1]$ by the bvp
\begin{equation}
-y^{^{\prime\prime}}(x)+q(x)y(x)=\lambda y(x),\text{ }y(1)=e^{it}y(0),\text{
}y^{^{\prime}}(1)=e^{it}y^{^{\prime}}(0).
\end{equation}
Moreover, we proved that if $t\neq0,\pi$, then $(2\pi n+t)^{2}$ is a simple
eigenvalue of $L_{t}(q)$ and find explicit formulas for the\ corresponding
eigenfunctions. Finally, we consider the inverse problem for the general case (11).

As far as I am concerned for the multidimensional Schrodinger operator with a
potential from some subset of $S$ there exists only one paper [6]. In [6]
Gasimov investigated the three dimensional operator $L(q)$ with periodic,
relative to $2\pi\mathbb{Z}^{3},$ potential of the form
\begin{equation}
q(x)=\sum_{\gamma\in Z^{+}}q_{\gamma}e^{i\left\langle \gamma,x\right\rangle
},\text{ }\sum_{\gamma\in Z^{+}}\mid q_{\gamma}\mid<\infty,
\end{equation}
where $Z^{+}=\left\{  (m_{1},m_{2},m_{3})\in\mathbb{Z}^{3}:m_{1}+m_{2}%
+m_{3}\geq1,\text{ }m_{j}\geq0,\forall j=1,2,3\right\}  $ is a part of
$\mathbb{Z}^{3}$ lying in the first octant of $\mathbb{R}^{3}.$ He announces a
formula for the resolvent kernel of $\left(  L(q)-k^{2}\right)  ^{-1}$ with
$\operatorname{Im}k\neq0$, which indicates that $\left(  L-k^{2}\right)
^{-1}$ is bounded for such $k$. He also announces that the spectrum of $L$ is
$[0,\infty)$, and gives a Plancherel theorem, where the Fourier transform is
given in terms of certain solutions to $-\Delta u+qu=k^{2}u$.

In this papers, by combining the methods of my papers [12-16], we investigate
the periodic multidimensional Schrodinger operator $L(q)$ of arbitrary
dimension $d$ and arbitrary lattice $\Omega$ when the potential $q$ belongs to
the large than (13) class $S$. The main results of this paper are formulated
in Section 2, where we prove that if $q\in S$ then for all $t\in F^{\star}$
the eigenvalues of $L_{t}(q)$ consist of the numbers $\left\vert
\gamma+t\right\vert ^{2}$, for $\ \gamma\in\Gamma,$ that is, the Bloch
eigenvalues of $L(q)$ for $q\in S$ coincides with the Bloch eigenvalues of the
free operator $L(0).$ It implies that the isoenergetic surfaces of $L(q)$ and
$L(0)$ are the same. Moreover, we find explicit formulas for the\ Bloch
functions. At the end of Section 2, we prove that in the two and three
dimensional cases the main results continue to hold if (4) is replaced by
$q\in L_{2}(F).$ In Section 3 using the results of Section 2 and the approach
of the papers [7,8] we investigate the multiplicity of the Bloch eigenvalue
and consider the necessary and sufficient conditions on the potential which
provide some root functions to be eigenfunctions. Besides, we investigate in
detail the root function of the bvp (12) for $t=0,\pi$ when the potential $q$
satisfies (11).

\section{ Main Results}

First let us formulate the main results. For this we introduce the following
notations and recall some well known facts. For $b\in\Gamma$ the hyperplanes
$\left\{  x\in\mathbb{R}^{d}:\left\vert x\right\vert =\left\vert
x+b\right\vert \right\}  $ are called diffraction hyperplanes. The number
$\mid\gamma+t\mid^{2}$ is a simple eigenvalue of $L_{t}(0)$ if and only if
$\gamma+t$ does not belong to any diffraction hyperplane, that is,
\begin{equation}
\mid\gamma+t\mid\neq\mid\gamma+b+t\mid,\text{ }\forall b\in\Gamma,\text{
}b\neq0.
\end{equation}

A number $\lambda$ is a multiple eigenvalue of $L_{t}(0)$ of multiplicity $m$
if and only if there exist $m$ different vectors $b_{1},b_{2},...,b_{m}$ of
the lattice $\Gamma$ such that
\begin{equation}
\lambda=\mid b_{1}+t\mid^{2}=\mid b_{2}+t\mid^{2}=...=\mid b_{m}+t\mid^{2}.
\end{equation}
By the definitions of $v_{k}$ and $\Gamma(k)$ (see (2)) for each $b_{j}$ there
exists $p_{j}\in\mathbb{Z}$ such that
\begin{equation}
b_{j}\in\Gamma(k,p_{j}):=\left\{  p_{j}v_{k}+a:a\in\Gamma(k)\right\}  .
\end{equation}
Let us enumerate the vectors $b_{j}$ so that $p_{1}\geq p_{2}\geq...$ Then
there exists $s$ such that
\begin{equation}
p_{1}=p_{2}=...=p_{s}>p_{s+1}\geq p_{s+2}\geq...
\end{equation}

Finally recall that the isoenergetic surfaces of the operators $L(q)$ and
$L(0)$ corresponding to the energy $\rho^{2}$ are respectively the sets
\[
I_{\rho}(q)=\{t\in F^{\ast}:\exists N,\Lambda_{N}(t)=\rho^{2}\}\text{ }%
\And\text{ }I_{\rho}(0)=\{t\in F^{\ast}:\exists\gamma\in\Gamma,\mid
\gamma+t\mid=\rho\}.
\]
The isoenergetic surface $I_{\rho}(0)$ is the translations of the sphere
$\{\mid x\mid=\rho\}$ by the vectors $\gamma\in\Gamma$ to the fundamental
domain $F^{\ast}$ of the reciprocal lattice $\Gamma.$

\begin{theorem}
(\textbf{Main Results}). $(a)$ If $q\in S$, then for any $t\in F^{\ast}$, the
set of eigenvalues of $L_{t}(q)$ is $\left\{  \mid\gamma+t\mid^{2}:\gamma
\in\Gamma\text{ }\right\}  ,$ that is,
\[
\sigma(L_{t}(q))=\sigma(L_{t}(0)),\text{ }\forall t\in F^{\ast}\And
\sigma(L(q))=%
{\textstyle\bigcup\limits_{t\in F^{\ast}}}
\sigma(L_{t}(q))=[0,\infty).
\]

$(b)$ For any $q\in S$ and $\rho\in\lbrack0,\infty)$ the isoenergetic surface
$I_{\rho}(q)$ of $L(q)$ coincides with the isoenergetic surface $I_{\rho}(0)$
of the free operator $L(0).$

$(c)$ Let $\mid\gamma+t\mid^{2}$ be a simple eigenvalue of $L_{t}(0)$. If
$q\in S(k+),$ then there exists only one eigenfunction $\Psi_{\gamma+t}(x)$ of
$L_{t}(q)$ corresponding to $\mid\gamma+t\mid^{2}.$ It can be normalized by
\begin{equation}
(\Psi_{\gamma+t},e^{i\left\langle \gamma+t,x\right\rangle })=1
\end{equation}
and satisfies (19) and (47) (see below and Theorem 4).

$(d)$ Let $\lambda$ be the eigenvalue of $L_{t}(0)$ \ of multiplicity $m.$ If
$q\in S(k+)$, then the functions $\Psi_{b_{j}+t}(x)$ for $j=1,2,...,s$ defined
in (21) are linearly independent eigenfunctions of $L_{t}(q)$ corresponding to
$\lambda$, where $b_{j}$ and $s$ are defined in (15)-(17).

The theorem continues to hold if $S(k+)$ and $\Gamma(k+)$ are replaced by
$S(k-)$ and $\Gamma(k-).$\textit{ } In the case $d=2$ and $d=3$ the main
results continue to hold if the condition (4) is replaced by $q\in L_{2}(F).$
\end{theorem}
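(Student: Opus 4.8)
The plan is to exploit the one-sided support of the Fourier coefficients of $q$, which renders multiplication by $q$ strictly triangular with respect to the grading of $\Gamma$ by the coefficient of $v_{k}$. Writing each $\beta\in\Gamma$ uniquely as $\beta=p(\beta)v_{k}+a(\beta)$ with $p(\beta)\in\mathbb{Z}$ and $a(\beta)\in\Gamma(k)$, I call $p(\beta)$ the level of $\beta$. If $q\in S(k+)$, then $qe^{i\left\langle \beta+t,x\right\rangle }=\sum_{\gamma'\in\Gamma(k+)}q_{\gamma'}e^{i\left\langle \beta+\gamma'+t,x\right\rangle }$ raises the level by at least one. Expanding a Bloch function as $\Psi=\sum_{\beta\in\Gamma}u_{\beta}e^{i\left\langle \beta+t,x\right\rangle }$, the equation $L_{t}(q)\Psi=\lambda\Psi$ becomes
\[
(\mid\beta+t\mid^{2}-\lambda)\,u_{\beta}=-\sum_{\gamma'\in\Gamma(k+)}q_{\gamma'}\,u_{\beta-\gamma'},
\]
where every term on the right has level $p(\beta-\gamma')\leq p(\beta)-1$. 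This strictly triangular recursion in the level variable is the mechanism behind all four assertions.

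For part $(c)$ I would fix $\gamma$ with $\mid\gamma+t\mid^{2}$ simple, set $\lambda=\mid\gamma+t\mid^{2}$, normalize by $u_{\gamma}=1$, and put $u_{\beta}=0$ for every $\beta\neq\gamma$ of level $\leq p(\gamma)$. The recursion then determines all higher coefficients by division by $\mid\beta+t\mid^{2}-\lambda$, which is nonzero precisely because the simplicity condition $\mid\gamma+t\mid\neq\mid\gamma+b+t\mid$ forbids any other $\beta$ on the sphere of radius $\mid\gamma+t\mid$. This yields a function supported on $\gamma$ and on $\gamma+\delta$, $\delta\in\Gamma(k+)$, obeying $(\Psi_{\gamma+t},e^{i\left\langle \gamma+t,x\right\rangle })=1$; uniqueness follows because any eigenfunction obeys the same triangular system and simplicity leaves $u_{\gamma}$ as the only free parameter. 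For part $(d)$, with $\lambda=\mid b_{1}+t\mid^{2}=\cdots=\mid b_{m}+t\mid^{2}$ and the enumeration $p_{1}=\cdots=p_{s}>p_{s+1}\geq\cdots$, the same upward recursion started at $b_{j}$ meets a vanishing denominator exactly when it reaches another resonant vector of strictly higher level; this cannot occur for the top-level indices $j=1,\dots,s$, so for each of them the construction closes, with the convention that the coefficient on every other top-level resonant vector is set to zero. Since $\Psi_{b_{j}+t}=e^{i\left\langle b_{j}+t,x\right\rangle }+(\text{strictly higher-level terms})$ and the $b_{j}$, $j\leq s$, share the maximal level $p_{1}$ but have distinct $\Gamma(k)$-parts, the resulting $s$ eigenfunctions are linearly independent; the lower-level vectors $b_{s+1},\dots,b_{m}$ generate associated (root) functions rather than eigenfunctions, which is why the geometric multiplicity is $s$ and not $m$.

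Parts $(a)$ and $(b)$ then follow. The construction gives $\{\mid\gamma+t\mid^{2}:\gamma\in\Gamma\}\subseteq\sigma(L_{t}(q))$. For the reverse inclusion I would fix $\mu\notin\{\mid\gamma+t\mid^{2}:\gamma\in\Gamma\}$ and factor $L_{t}(q)-\mu=(L_{t}(0)-\mu)(I+R_{\mu}Q)$, where $R_{\mu}=(L_{t}(0)-\mu)^{-1}$ is bounded (indeed compact) and $Q$ is multiplication by $q$, bounded by $M$ under condition (4). The operator $R_{\mu}Q$ again raises the level by at least one; establishing $\lim_{n}\Vert(R_{\mu}Q)^{n}\Vert^{1/n}=0$ shows it is quasi-nilpotent, so $I+R_{\mu}Q$ is invertible and $\mu\notin\sigma(L_{t}(q))$. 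Hence $\sigma(L_{t}(q))=\sigma(L_{t}(0))$ for every $t$, and taking the union over $t\in F^{\ast}$ gives $\sigma(L(q))=\{\mid y\mid^{2}:y\in\mathbb{R}^{d}\}=[0,\infty)$. Assertion $(b)$ is then immediate, because $I_{\rho}(q)$ and $I_{\rho}(0)$ are determined solely by the now-coincident sets of Bloch eigenvalues. The $S(k-)$ version is obtained by replacing $v_{k}$ with $-v_{k}$ throughout; I note also that $\overline{q}\in S(k-)$ when $q\in S(k+)$, so the adjoint $L_{t}(q)^{\ast}=L_{t}(\overline{q})$ is an operator of the same type, which offers an alternative route to $(a)$ via completeness of the adjoint root functions and the orthogonality relation $\mu(\Phi,\Phi_{\gamma+t})=\mid\gamma+t\mid^{2}(\Phi,\Phi_{\gamma+t})$.

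The main obstacle in every part is the convergence of these level-by-level expansions, that is, the control of small divisors: the denominators $\mid\beta+t\mid^{2}-\lambda=\mid\delta\mid^{2}+2\left\langle \gamma+t,\delta\right\rangle $ for $\beta=\gamma+\delta$ can be arbitrarily small when $\gamma+\delta+t$ lies near the sphere of radius $\mid\gamma+t\mid$, even though they never vanish in the simple case. I would handle this by bounding $\sum_{\beta}\mid u_{\beta}\mid$ (respectively by estimating $\Vert(R_{\mu}Q)^{n}\Vert$) one level at a time, using that a denominator is small only for $\delta$ in a thin neighbourhood of a diffraction hyperplane and that $\mid\delta\mid^{2}$ dominates once the level is large, while absorbing the offending lattice points with the bound $\sum_{\gamma'}\mid q_{\gamma'}\mid=M$; this is the multidimensional analogue of Gasymov's one-dimensional estimates. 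Finally, for the weakening of (4) to $q\in L_{2}(F)$ when $d=2,3$, the operator $Q$ is no longer bounded, so I would instead show that $R_{\mu}Q$ is Hilbert--Schmidt: its Hilbert--Schmidt norm is governed by $\sum_{\delta\in\Gamma(k+)}\mid\delta\mid^{-4}$, which converges exactly when $d<4$, and it is this convergence that confines the stronger result to dimensions two and three.
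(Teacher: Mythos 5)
Your central mechanism is exactly the paper's: the one--sided Fourier support of $q$ makes multiplication by $q$ a strictly level--raising operator, the eigenfunctions are built by the upward recursion (this is precisely the series (19)/(21) with the transformation $A(\gamma)$ of (20)), and convergence comes from the fact that a sum of $s$ elements of $\Gamma(k+)$ has $v_{k}$--level at least $s$, hence norm at least $c(k)s$ (the paper's Proposition 1), so the denominators eventually grow quadratically and their product beats $M^{m}$. Your treatment of (c), (d) and of the $L_{2}(F)$ case for $d=2,3$ (Schwarz/Hilbert--Schmidt in place of the $\ell_{1}$ bound, with $\sum_{\beta}|\beta|^{-4}<\infty$ exactly for $d\leq3$) matches Theorems 3, 4 and 5. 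Where you genuinely diverge is the inclusion $\sigma(L_{t}(q))\subset\sigma(L_{t}(0))$: the paper argues pointwise in $\gamma$ (iterate the coefficient identity $m$ times for each fixed $\gamma$, show the right side of (28) tends to $0$, conclude every Fourier coefficient of the putative eigenfunction vanishes), whereas you factor $L_{t}(q)-\mu=(L_{t}(0)-\mu)(I+R_{\mu}Q)$ and claim $R_{\mu}Q$ is quasi--nilpotent. Your version is stronger (it gives invertibility of the resolvent, not merely absence of eigenvalues), but it needs a bound on $\Vert(R_{\mu}Q)^{n}\Vert$ that is \emph{uniform in the starting frequency} $\beta$, and the paper's estimate (35) degrades as $\left\vert \beta+t\right\vert$ grows. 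The uniformity is recoverable, but only via an observation not in your sketch: each step increases the component of $\beta+\gamma(s)+t$ along $h_{k}$ by at least $\left\Vert h_{k}\right\Vert$, and that component is confined to an interval of length $2\sqrt{\mu}$ while the point stays near the sphere $\left\vert x\right\vert^{2}=\mu$, so the number of factors that are merely bounded below by $c$ is at most $O(\sqrt{\left\vert \mu\right\vert}/\left\Vert h_{k}\right\Vert)$ independently of $\beta$, and the remaining factors grow quadratically. Mere strict triangularity does not give quasi--nilpotence (a weighted shift is strictly triangular in your grading and can have any spectral radius), so this step must be made explicit.

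Two further points. First, your uniqueness argument in (c) --- ``the triangular system leaves $u_{\gamma}$ as the only free parameter'' --- has a gap: the level grading is unbounded below, so the system has no bottom row and triangularity alone does not force $u_{\beta}=0$ for $\beta\neq\gamma$ of level $\leq p(\gamma)$. One must iterate the relation downward $m$ times, bound $\left\vert u_{\beta'}\right\vert\leq\left\Vert \Psi\right\Vert$, and let $m\rightarrow\infty$ using the same denominator growth; this is the content of the paper's (50)--(51) and it is the step that actually pins down the eigenfunction. Second, your parenthetical claim that $b_{s+1},\dots,b_{m}$ ``generate associated functions rather than eigenfunctions, which is why the geometric multiplicity is $s$'' is false in general: the paper's Theorem 8 gives an explicit polynomial condition (66) on the Fourier coefficients under which the root function attached to a second--plane vector \emph{is} an eigenfunction, so the geometric multiplicity can exceed $s$. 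This does not damage part (d), which asserts only that the $s$ top--level functions are linearly independent eigenfunctions, but the side remark should be deleted.
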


\begin{proof}
The proof of $(a)$ follows from Theorem 2 and Theorem 3, where the relations
$\sigma(L_{t}(q))\subset\sigma(L_{t}(0))$ and $\sigma(L_{t}(0))\subset
\sigma(L_{t}(q))$ are proved respectively. $(b)$ follows from $(a).$ In
Theorem 3 we prove that if (15)-(17) holds then the function $\Psi_{b_{j}%
+t}(x)$ defined in (21) is an eigenfunction. Since it is clear that the
functions $\Psi_{b_{j}+t}(x)$ for $j=1,2,...,s$ are linearly independent we
get the prove of $(d),$ by proving Theorem 3. The theorems 3 and 4 imply
$(c),$ because only one eigenfunction may provide (19) and (47). We prove the
theorems for $q\in S(k+).$ The proof of the case $q\in S(k-)$ is the same. By
(5) they are enough for the proof of the statements for $q\in S.$ The last
statement is proved in Theorem 5
\end{proof}

Let $\mid\gamma+t\mid^{2}$ be the simple eigenvalue of \ $L_{t}(0),$ which
means that (14) holds. Introduce the function $\Psi_{\gamma+t}(x)$ defined by
\begin{equation}
\Psi_{\gamma+t}(x)=e^{i\left\langle \gamma+t,x\right\rangle }+A(\gamma
)e^{i\left\langle \gamma+t,x\right\rangle }+\left(  A(\gamma)\right)
^{2}e^{i\left\langle \gamma+t,x\right\rangle }+...,
\end{equation}
where $A(\gamma)$ is the linear transformation taking $e^{i\left\langle
b+t,x\right\rangle }$ for $b\in\left(  \gamma+\Gamma(k+)\right)  \cup\left\{
\gamma\right\}  $ to
\begin{equation}
A(\gamma)e^{i\left\langle b+t,x\right\rangle }=\sum_{\gamma_{1}\in\Gamma
(k+)}\frac{q_{\gamma_{1}}e^{i\left\langle b+\gamma_{1}+t,x\right\rangle }%
}{\mid\gamma+t\mid^{2}-\mid b+\gamma_{1}+t\mid^{2}}.
\end{equation}
In the proof of Theorem 3 we will prove that the series (19) converges to some
element of $L_{2}(F).$ Now we only note that if (14) holds then the
denominator in (20) are not zero and $(A(\gamma))^{n}$ for all $n=1,2,...,$
are defined.

Similarly in case (15)-(17), using the definition of $\Gamma(k+),$ one can
readily see that the transformations $\left(  A(b_{j})\right)  ^{n}$ for
$j=1,2,...,s$ and $n=1,2,...,$ are defined on $e^{i\left\langle
b+t,x\right\rangle }$ for $b\in\left(  b_{j}+\Gamma(k+)\right)  \cup\left\{
b_{j}\right\}  .$ Introduce the function $\Psi_{b_{j}+t}(x)$ defined by
\textit{ }%
\begin{equation}
\text{ }\Psi_{b_{j}+t}(x)=e^{i\left\langle b_{j}+t,x\right\rangle }+\left(
A(b_{j})\right)  e^{i\left\langle b_{j}+t,x\right\rangle }+\left(
A(b_{j})\right)  ^{2}e^{i\left\langle b_{j}+t,x\right\rangle }+...
\end{equation}

\begin{remark}
One can readily see that the transformation $A(b)$ can be defined by
\begin{equation}
A(b)f=(\mid b+t\mid^{2}I+\Delta)^{-1}qf
\end{equation}
in some subspaces. It is clear that if (14) holds then $(\mid\gamma+t\mid
^{2}I+\Delta)^{-1}$ is well-defined in the subspace $E(\gamma)$ generated by
the orthonormal system $\left\{  e^{i\left\langle b+t,x\right\rangle }%
:b\in\Gamma\backslash\gamma\right\}  .$ On the other hand, it follows from (2)
and definition of $S(k+)$ that if $q\in S(k+)$ then $q^{n}e^{i\left\langle
\gamma+t,x\right\rangle }\in E(\gamma)$ for $n=1,2,...$. Therefore $\left(
A(\gamma)\right)  ^{n}e^{i\left\langle \gamma+t,x\right\rangle }$ exist for
$n=1,2,....$Similarly, in case (15)-(17), $(\mid b_{j}+t\mid^{2}I+\Delta
)^{-1}$ is defined in the subspace $E(b_{1},b_{2},...,b_{m})$ generated by the
orthonormal system $\left\{  e^{i\left\langle b+t,x\right\rangle }:b\in
\Gamma\backslash\left\{  b_{1},b_{2},...,b_{m}\right\}  \right\}  .$ On the
other hand, by (2) if $q\in S(k+)$ and $j=1,2,...,s$ then $q^{n}%
e^{i\left\langle b_{j}+t,x\right\rangle }\in E(b_{1},b_{2},...,b_{m})$ for
$n=1,2,...$. Therefore $\left(  A(b_{j})\right)  ^{n}e^{i\left\langle
b_{j}+t,x\right\rangle }$ exist for $n=1,2,....,$ and $j=1,2,...,s$.
\end{remark}

To consider the Bloch eigenvalues $\Lambda_{N}(t)$ and Bloch functions
$\Psi_{N,t}$ we use the following iteration of the formula
\begin{equation}
(\Lambda_{N}(t)-\mid\gamma+t\mid^{2})(\Psi_{N,t},e^{i\left\langle
\gamma+t,x\right\rangle })=(q\Psi_{N,t},e^{i\left\langle \gamma
+t,x\right\rangle })
\end{equation}
which is obtained from\ (7) by multiplying by $e^{i\left\langle \gamma
+t,x\right\rangle }$ and using (8). If
\begin{equation}
\Lambda_{N}(t)\neq\mid\gamma+t\mid^{2},\text{ }\forall\gamma\in\Gamma,
\end{equation}
then (23) can be iterated as follows. Using \ the expansion (3) of $q\in
S(k+)$ in (23), we get
\begin{equation}
(\Lambda_{N}(t)-\mid\gamma+t\mid^{2})(\Psi_{N,t},e^{i\left\langle
\gamma+t,x\right\rangle })=\sum_{\gamma_{1}\in\Gamma(k+)}q_{\gamma_{1}}%
(\Psi_{N,t},e^{i\left\langle \gamma-\gamma_{1}+t,x\right\rangle }).
\end{equation}
On the other hand, in (23) replacing $\gamma$ by $\gamma-\gamma_{1}$ and
taking into account (24) we obtain
\begin{equation}
(\Psi_{N,t},e^{i\left\langle \gamma-\gamma_{1}+t,x\right\rangle }%
)=\dfrac{(q\Psi_{N,t},e^{i\left\langle \gamma-\gamma_{1}+t,x\right\rangle }%
)}{\Lambda_{N}(t)-\mid\gamma-\gamma_{1}+t\mid^{2}}.
\end{equation}
Now, using (26) in (25) we get
\begin{equation}
(\Lambda_{N}(t)-\mid\gamma+t\mid^{2})(\Psi_{N,t},e^{i\left\langle
\gamma+t,x\right\rangle })=\sum_{\gamma_{1}}\dfrac{q_{\gamma_{1}}(q\Psi
_{N,t},e^{i\left\langle \gamma-\gamma_{1}+t,x\right\rangle })}{\Lambda
_{N}(t)-\mid\gamma-\gamma_{1}+t\mid^{2}}.
\end{equation}
Repeating this process $m$ times we obtain%

\begin{equation}
(\Lambda_{N}(t)-\mid\gamma+t\mid^{2})(\Psi_{N,t},e^{i\left\langle
\gamma+t,x\right\rangle })=\sum_{\gamma_{1},\gamma_{2},...,\gamma_{m}}%
\frac{q_{\gamma_{1}}q_{\gamma_{2}}...q_{\gamma_{m}}(q\Psi_{N,t}%
,e^{i\left\langle \gamma+t-\gamma(m),x\right\rangle })}{%
{\textstyle\prod\limits_{s=1,2,...,m}}
[\Lambda_{N}(t)-\left\vert \gamma+t-\gamma(s)\right\vert ^{2}]},
\end{equation}
where $\gamma(j)=:\gamma_{1}+\gamma_{2}+\cdots+\gamma_{j}$ for $j=1,2,....$
and the summations in (27) and (28) are taken under the conditions $\gamma
_{1}\in\Gamma(k+)$ and $\gamma_{1},\gamma_{2},...,\gamma_{m}\in\Gamma(k+)$ respectively.

To estimate the right-hand side of (28) we use the following simple proposition.

\begin{proposition}
For each $k\in\left\{  1,2,...d\right\}  $ there exists a positive constant
$c(k)$ such that if $\gamma_{j}\in\Gamma(k+)$ for $j=1,2,....,$ then
\begin{equation}
\left\vert \gamma_{1}+\gamma_{2}+\cdots+\gamma_{s}\right\vert \geq c(k)s
\end{equation}
for all $s\in\mathbb{N}$. The proposition continues to hold if $\Gamma(k+)$ is
replaced by $\Gamma(k-).$
\end{proposition}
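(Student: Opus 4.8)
The plan is to reduce the estimate to a single orthogonal projection. First I would write each summand in its canonical form: by the definition (2) of $\Gamma(k+)$, every $\gamma_{j}$ can be written as $\gamma_{j}=u_{j}+n_{j}v_{k}$ with $u_{j}\in\Gamma(k)$ and $n_{j}\in\mathbb{N}$. Since $\Gamma(k)$ is a sublattice, hence an additive group closed under addition, summing over $j=1,2,\dots,s$ gives
\begin{equation}
\gamma_{1}+\gamma_{2}+\cdots+\gamma_{s}=U+Nv_{k},\quad U:=\sum_{j=1}^{s}u_{j}\in\Gamma(k),\quad N:=\sum_{j=1}^{s}n_{j}.
\end{equation}
Because each $n_{j}\geq1$, the integer $N$ satisfies $N\geq s$. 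Thus the whole estimate is controlled by the length of a single vector of the form $U+Nv_{k}$ with $U\in\Gamma(k)$.

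Next I would exploit the linear independence of the generators. Let $V$ denote the linear span of $\Gamma(k)$, equivalently the span of $\left\{v_{1},\dots,v_{d}\right\}\backslash\left\{v_{k}\right\}$. Since $v_{1},\dots,v_{d}$ are linearly independent (they generate the full-rank lattice $\Gamma$, hence form a basis of $\mathbb{R}^{d}$), the vector $v_{k}$ does not lie in $V$. Decompose $v_{k}=v_{k}'+v_{k}''$, where $v_{k}'$ is the orthogonal projection of $v_{k}$ onto $V$ and $v_{k}''$ is orthogonal to $V$; then $v_{k}''\neq0$. As $U\in\Gamma(k)\subset V$, the vector $U+Nv_{k}'$ lies in $V$ and is orthogonal to $Nv_{k}''$. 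By the Pythagorean identity,
\begin{equation}
|U+Nv_{k}|^{2}=|U+Nv_{k}'|^{2}+N^{2}|v_{k}''|^{2}\geq N^{2}|v_{k}''|^{2},
\end{equation}
so that $|U+Nv_{k}|\geq N|v_{k}''|\geq s|v_{k}''|$. Setting $c(k):=|v_{k}''|>0$, the distance from $v_{k}$ to the subspace spanned by the remaining generators, yields (29).

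For the case $\Gamma(k-)$ the argument is identical: one obtains $\gamma_{1}+\cdots+\gamma_{s}=U-Nv_{k}$ with the same $U\in\Gamma(k)$ and $N\geq s$, and the orthogonal component again contributes $N^{2}|v_{k}''|^{2}$, so the same constant $c(k)=|v_{k}''|$ works. The only point requiring care is the strict positivity of $c(k)$, which is precisely the assertion that $v_{k}\notin\mathrm{span}(\Gamma(k))$; this is guaranteed by the linear independence of the basis $v_{1},\dots,v_{d}$ of $\Gamma$. I expect no genuine obstacle here, since once the lattice sum is separated into its $\Gamma(k)$-part and its $v_{k}$-coefficient the estimate collapses to the fixed positive length $|v_{k}''|$.
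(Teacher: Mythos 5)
Your proof is correct and follows essentially the same route as the paper: you decompose $v_{k}$ orthogonally against the hyperplane spanned by the remaining generators (the paper's $v_{k}=u_{k}+h_{k}$ with $h_{k}\perp P(k)$ is your $v_{k}=v_{k}'+v_{k}''$), note that the $v_{k}$-coefficient of the sum is at least $s$, and conclude by the Pythagorean theorem with the same constant $c(k)=\Vert h_{k}\Vert=|v_{k}''|$. No differences worth noting.
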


\begin{proof}
We prove the proposition for $\Gamma(k+).$ The proof for $\Gamma(k-)$ is the
same. Let $\ P(k)$ be the hyperplane spanned by $v_{1},v_{2},...,v_{k-1}%
,v_{k+1},...,v_{d}.$ Since $v_{k}\notin P(k),$ it has the orthogonal
decomposition%
\begin{equation}
v_{k}=u_{k}+h_{k},\text{ }u_{k}\in P(k),\text{ }h_{k}\perp P(k)=0,\text{
}h_{k}\neq0.
\end{equation}
On the other hand, by (2), if $\gamma_{j}\in\Gamma(k+)$ then $\gamma_{j}%
=a_{j}+n_{j}v_{k}$, where $a_{j}\in\Gamma(k)\subset P(k)$ and $n_{j}%
\in\mathbb{N}$. Therefore there exist $u\in\Gamma(k)\subset P(k)$ and $w\in
P(k)$ such that
\begin{equation}%
{\textstyle\sum\limits_{j=1}^{s}}
\gamma_{j}=u+\left(
{\textstyle\sum\limits_{j=1}^{s}}
n_{j}\right)  v_{k}=w+\left(
{\textstyle\sum\limits_{j=1}^{s}}
n_{j}\right)  h_{k},
\end{equation}
where $\left\langle u,h_{k}\right\rangle =0$ (see (30)), $\left\langle
w,h_{k}\right\rangle =0$ and $n_{1}+n_{2}+\cdots+n_{s}\geq s$. Thus using (31)
and Pythagorean theorem we see that (29) holds for $c(k)=\left\Vert
h_{k}\right\Vert $
\end{proof}

Now we are ready to prove the following

\begin{theorem}
If $q\in S$ and $t\in F^{\ast}$ then for every eigenvalue $\Lambda_{N}(t)$ of
$L_{t}(q)$ there exists $\gamma\in\Gamma$ such that $\Lambda_{N}(t)=\left\vert
\gamma+t\right\vert ^{2},$ that is, $\sigma(L_{t}(q))\subset\sigma\left(
L_{t}(0)\right)  $ for all $t\in F^{\ast}.$
\end{theorem}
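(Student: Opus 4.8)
The plan is to argue by contradiction. Suppose $\Lambda_N(t)$ is an eigenvalue of $L_t(q)$ with eigenfunction $\Psi_{N,t}$, normalized so that $\|\Psi_{N,t}\|=1$, for which no $\gamma\in\Gamma$ satisfies $\Lambda_N(t)=|\gamma+t|^2$; equivalently, condition (24) holds. Then every denominator occurring in the iteration (23)--(28) is nonzero, so the identity (28) is valid for all $m$. I would then show that the right-hand side of (28) tends to $0$ as $m\to\infty$. Since its left-hand side $(\Lambda_N(t)-|\gamma+t|^2)(\Psi_{N,t},e^{i\langle\gamma+t,x\rangle})$ does not depend on $m$, this forces $(\Psi_{N,t},e^{i\langle\gamma+t,x\rangle})=0$ for every $\gamma\in\Gamma$, using (24) to cancel the nonzero prefactor. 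As $\{e^{i\langle\gamma+t,x\rangle}:\gamma\in\Gamma\}$ is a complete orthonormal system in $L_2(F)$, this yields $\Psi_{N,t}=0$, contradicting that it is an eigenfunction.

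To estimate the right-hand side of (28) I would bound numerator and denominator separately. For the numerator, since $q\in S(k+)$ satisfies $\sum_{\gamma_1\in\Gamma(k+)}|q_{\gamma_1}|=M<\infty$, the potential is bounded, $|q(x)|\le M$, whence $\|q\Psi_{N,t}\|\le M$ and each factor $|(q\Psi_{N,t},e^{i\langle\gamma+t-\gamma(m),x\rangle})|$ is at most $M$, being a Fourier coefficient of $q\Psi_{N,t}$. Summing the products of moduli gives $\sum_{\gamma_1,\dots,\gamma_m\in\Gamma(k+)}|q_{\gamma_1}|\cdots|q_{\gamma_m}|=M^m$. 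For the denominators I would use two facts. First, since $|\beta+t|^2\to\infty$ as $|\beta|\to\infty$, the set of eigenvalues $\{|\beta+t|^2:\beta\in\Gamma\}$ of $L_t(0)$ is discrete, so under (24) the number $\delta:=\mathrm{dist}\!\left(\Lambda_N(t),\{|\beta+t|^2:\beta\in\Gamma\}\right)$ is strictly positive, giving $|\Lambda_N(t)-|\gamma+t-\gamma(s)|^2|\ge\delta$ for every $s$. Second, Proposition 1, i.e.\ (29), gives $|\gamma(s)|\ge c(k)s$, hence $|\gamma+t-\gamma(s)|\ge c(k)s-|\gamma+t|$, and for all $s\ge s_0$, with $s_0$ depending only on $\gamma$, $t$, $\Lambda_N(t)$ and $c(k)$, one obtains the quadratic lower bound $|\Lambda_N(t)-|\gamma+t-\gamma(s)|^2|\ge \tfrac18 c(k)^2 s^2$.

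Combining these, the denominator product in (28) is bounded below, uniformly in the choice of $\gamma_1,\dots,\gamma_m\in\Gamma(k+)$, by $\delta^{\,s_0-1}\prod_{s=s_0}^m \tfrac18 c(k)^2 s^2$, which grows like a constant times $(c(k)^2/8)^m (m!)^2$. Thus the modulus of the right-hand side of (28) is at most a constant times $M^{m}\big/\big((c(k)^2/8)^m (m!)^2\big)$, and since $(m!)^2$ dominates every exponential this bound tends to $0$ as $m\to\infty$, completing the argument. I expect the main obstacle to be the control of the denominators: the clean quadratic growth from (29) is available only for the tail $s\ge s_0$, whereas the finitely many small-index factors with $s<s_0$ need not grow and could a priori be arbitrarily small. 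What rescues the estimate is that these factors are nonetheless bounded below by the single positive constant $\delta$, which exists precisely because (24) keeps $\Lambda_N(t)$ at positive distance from the discrete set $\{|\beta+t|^2:\beta\in\Gamma\}$. A secondary technical point to verify is the legitimacy of the repeated substitution leading to (28) and of interchanging summation with the limit in $m$, both justified by the absolute convergence furnished by the same bounds.
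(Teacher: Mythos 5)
Your proposal is correct and follows essentially the same route as the paper: a contradiction argument assuming (24), the uniform lower bound (32) obtained from the discreteness of $\{|\beta+t|^2:\beta\in\Gamma\}$, the numerator bound $M^m$ from (4), and the quadratic growth of the denominators for large $s$ via Proposition 1 (the paper's (34)--(35)), forcing the right-hand side of (28) to vanish as $m\to\infty$. Your write-up merely makes explicit the factorial-versus-exponential comparison that the paper leaves to the reader.
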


\begin{proof}
By (5) it is enough to prove the theorem for $q\in\left(  S(k+)\cup
S(k-)\right)  .$ We prove it for $q\in S(k+).$ The proof of the case $q\in
S(k-)$ is the same. Suppose, to the contrary that, there exists $N$ such that
(24) holds. Then there exists a positive number $c$ such that
\begin{equation}
\left\vert \Lambda_{N}(t)-\left\vert \gamma+t\right\vert ^{2}\right\vert
>c,\text{ }\forall\gamma\in\Gamma.
\end{equation}
Now using (4), (29) and (32) let us estimate the right-hand side of (28). It
immediately follows from (4) that%
\begin{equation}
\sum_{\gamma_{1},\gamma_{2},...,\gamma_{m}}\left\vert q_{\gamma_{1}}%
q_{\gamma_{2}}...q_{\gamma_{m}}\right\vert \leq M^{m}.
\end{equation}
On the other hand, by (29) we have
\begin{equation}
\left\vert \gamma+t\pm\gamma(s)\right\vert ^{2}\geq\left(  c(k)s-\left\vert
\gamma+t\right\vert \right)  ^{2}%
\end{equation}
and
\begin{equation}
\left\vert \Lambda_{N}(t)-\left\vert \gamma+t-\gamma(s)\right\vert
^{2}\right\vert \geq\left(  c(k)s-\left\vert \gamma+t\right\vert \right)
^{2}-\left\vert \Lambda_{N}(t)\right\vert .
\end{equation}
Now using (32), (33) and (35) one can readily see that the right side of (28)
approaches $0$ as $m\rightarrow\infty.$ Therefore in (28) letting $m$ tend to
$\infty$ and then using (24) we obtain%
\begin{equation}
(\Lambda_{N}(t)-\mid\gamma+t\mid^{2})(\Psi_{N,t},e^{i\left\langle
\gamma+t,x\right\rangle })=0\text{ }\And(\Psi_{N,t},e^{i\left\langle
\gamma+t,x\right\rangle })=0
\end{equation}
for all $\gamma\in\Gamma.$ The last equality of (36) is a contradiction, since
$\left\{  e^{i\left\langle \gamma+t,x\right\rangle }:\gamma\in\Gamma\right\}
$ is an orthonormal basis in $L_{2}(F)$ and $\left\Vert \Psi_{N,t}\right\Vert
\neq0.$ The theorem is proved
\end{proof}

Now we prove that $\sigma(L_{t}(0))\subset\sigma\left(  L_{t}(q)\right)  $ and
consider the Bloch function .

\begin{theorem}
Let $q\in S$ and $t\in F^{\ast}.$ Then for all $\gamma\in\Gamma$ the numbers
$\mid\gamma+t\mid^{2}$ are the eigenvalues of $L_{t}(q).$ In cases (14) and
(15)-(17) the functions defined in (19) and (21) for $j=1,2,...,s$ are the
eigenfunctions corresponding to $\mid\gamma+t\mid^{2}$ and $\lambda$ respectively.
\end{theorem}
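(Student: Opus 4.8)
The plan is to reduce, as the authors do, to the case $q\in S(k+)$ and to fix $\gamma\in\Gamma$, writing $\lambda=\mid\gamma+t\mid^{2}$ and $\phi_{0}=e^{i\left\langle \gamma+t,x\right\rangle }$. Using the representation of Remark 1, namely $A(\gamma)=(\lambda I+\Delta)^{-1}q$ on the relevant subspace, I would recognize the series (19) as the Neumann series $\Psi_{\gamma+t}=\sum_{n=0}^{\infty}(A(\gamma))^{n}\phi_{0}$. The proof then splits into two independent tasks: (i) showing that this series converges absolutely in $L_{2}(F)$, and (ii) showing that its sum lies in the domain of $L_{t}(q)$ and satisfies $L_{t}(q)\Psi_{\gamma+t}=\lambda\Psi_{\gamma+t}$ with $\Psi_{\gamma+t}\neq0$. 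The half-space hypothesis $q\in S(k+)$ is what makes this clean: since $q$ has Fourier support only in $\Gamma(k+)$, each application of $A(\gamma)$ pushes the frequency support strictly forward into $\gamma+\Gamma(k+)$, so the iterates never return to the frequency $\gamma$ and the telescoping below is exact.

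For (i) I would expand $(A(\gamma))^{n}\phi_{0}$ exactly as the sum on the right of (28): a sum over $\gamma_{1},\dots,\gamma_{n}\in\Gamma(k+)$ of $q_{\gamma_{1}}\cdots q_{\gamma_{n}}\,e^{i\left\langle \gamma+\gamma(n)+t,x\right\rangle }$ divided by $\prod_{s=1}^{n}(\lambda-\mid\gamma+\gamma(s)+t\mid^{2})$, with $\gamma(s)=\gamma_{1}+\cdots+\gamma_{s}$. By the triangle inequality and (4) the numerators contribute at most $M^{n}$. For the denominators I would use two facts. First, every $\gamma(s)$ is a nonzero element of $\Gamma(k+)$ and the set $\{\mid\gamma^{\prime}+t\mid^{2}:\gamma^{\prime}\in\Gamma\}$ is locally finite; together with (14) this yields a uniform gap $\delta>0$ with $\mid\lambda-\mid\gamma+\gamma(s)+t\mid^{2}\mid\geq\delta$ for every admissible $\gamma(s)$. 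Second, Proposition 1 gives $\mid\gamma(s)\mid\geq c(k)s$, hence $\mid\gamma+\gamma(s)+t\mid^{2}\geq(c(k)s-\mid\gamma+t\mid)^{2}$, so for $s$ larger than some $s_{0}$ depending only on $\mid\gamma+t\mid$ one has $\mid\lambda-\mid\gamma+\gamma(s)+t\mid^{2}\mid\geq C s^{2}$ uniformly in the tuple. Splitting the product at $s_{0}$ and bounding the first $s_{0}$ factors by $\delta$ and the rest by $C s^{2}$ gives $\Vert (A(\gamma))^{n}\phi_{0}\Vert\leq M^{n}\delta^{-s_{0}}C^{-(n-s_{0})}(s_{0}!/n!)^{2}$ for $n>s_{0}$, which decays factorially, so absolute convergence follows.

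For (ii) I would first note that, because $\sum\mid q_{\gamma}\mid=M<\infty$, convolution by the Fourier data of $q$ is bounded on $L_{2}(F)$ with norm at most $M$, i.e. $\Vert q f\Vert\leq M\Vert f\Vert$. From the identity $\Delta(\lambda I+\Delta)^{-1}=I-\lambda(\lambda I+\Delta)^{-1}$ one gets $\Delta(A(\gamma))^{n}\phi_{0}=q(A(\gamma))^{n-1}\phi_{0}-\lambda(A(\gamma))^{n}\phi_{0}$, and with the bound just proved the series $\sum_{n}\Delta(A(\gamma))^{n}\phi_{0}$ also converges in $L_{2}(F)$; since $\Delta$ is closed, this shows $\Psi_{\gamma+t}$ lies in its domain and that $\Delta$ may be applied term by term. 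Now $(\Delta+\lambda I)(A(\gamma))^{n}\phi_{0}$ equals $0$ for $n=0$ (because $\lambda=\mid\gamma+t\mid^{2}$) and $q(A(\gamma))^{n-1}\phi_{0}$ for $n\geq1$, so summing telescopes to $(\Delta+\lambda I)\Psi_{\gamma+t}=q\Psi_{\gamma+t}$, i.e. $L_{t}(q)\Psi_{\gamma+t}=\lambda\Psi_{\gamma+t}$. Finally the $n=0$ term is the only contribution at frequency $\gamma$, so $(\Psi_{\gamma+t},e^{i\left\langle \gamma+t,x\right\rangle })=1$, giving the normalization (18) and in particular $\Psi_{\gamma+t}\neq0$, whence $\mid\gamma+t\mid^{2}\in\sigma(L_{t}(q))$.

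For the degenerate case (15)--(17) I would run the same argument with $A(b_{j})$, $j=1,\dots,s$. Here (14) fails, but the ordering (16)--(17) rescues the construction: for $j\leq s$ the vector $b_{j}$ has the maximal $v_{k}$-level $p_{1}$, and since any $\gamma(n)$ with $n\geq1$ raises the level by at least one, every frequency $b_{j}+\gamma(n)$ appearing in the series sits strictly above level $p_{1}$ and hence avoids the finite set $\{b_{1},\dots,b_{m}\}$ of vectors realizing $\lambda$. This is precisely the assertion in Remark 1 that $q^{n}e^{i\left\langle b_{j}+t,x\right\rangle }\in E(b_{1},\dots,b_{m})$, so $(\lambda I+\Delta)^{-1}$ is legitimately applied and all denominators are nonzero; convergence and the eigenequation then follow verbatim. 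Linear independence of $\Psi_{b_{1}+t},\dots,\Psi_{b_{s}+t}$ is immediate, since the only level-$p_{1}$ contribution to $\Psi_{b_{j}+t}$ is its $n=0$ term at frequency $b_{j}$, so the matrix $(\Psi_{b_{j}+t},e^{i\left\langle b_{i}+t,x\right\rangle })_{1\le i,j\le s}$ is the identity. The main obstacle throughout is task (i): one must control the finitely many possibly small denominators uniformly over all index tuples (via the gap $\delta$ and the local finiteness forced by (14), or the level count in (15)--(17)) while simultaneously extracting the quadratic growth of the remaining denominators from Proposition 1 — it is the interplay of these two bounds that produces the factorial decay needed both for convergence and for the term-by-term action of the unbounded operator $\Delta$.
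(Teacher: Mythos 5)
Your proposal is correct and follows essentially the same route as the paper's proof of Theorem 3: it reads (19) as the Neumann series for $A(\gamma)=(\mid\gamma+t\mid^{2}I+\Delta)^{-1}q$, expands $(A(\gamma))^{n}e^{i\left\langle \gamma+t,x\right\rangle }$ exactly as in (37), bounds it using (4) and Proposition 1, and obtains the eigenequation from the resolvent identity, with the degenerate case handled via the maximal-level observation of Remark 1. Your treatment is in fact slightly more explicit than the paper's at two points the paper leaves implicit --- the uniform lower bound on the finitely many small denominators coming from (14) together with local finiteness of the lattice, and the justification (closedness of $\Delta$) for applying $\Delta$ term by term --- but these are refinements of the same argument, not a different one.
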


\begin{proof}
First let us consider the case (14). It readily follows from (20) that
\begin{equation}
\left(  A(\gamma)\right)  ^{n}e^{i\left\langle \gamma+t,x\right\rangle }%
=\sum_{\gamma_{1},\gamma_{2},...,\gamma_{m}}\frac{q_{\gamma_{1}}q_{\gamma_{2}%
}...q_{\gamma_{n}}e^{i\left\langle \gamma+t+\gamma(n),x\right\rangle }}{%
{\textstyle\prod\limits_{s=1,2,...,n}}
\left(  \mid\gamma+t\mid^{2}-\left\vert \gamma+t+\gamma(s)\right\vert
^{2}\right)  }.
\end{equation}
Using (33) and (34) in (37) one can readily see that there exists a constant
$c$ such that
\begin{equation}
\left\vert \left(  A(\gamma)\right)  ^{n}e^{i\left\langle \gamma
+t,x\right\rangle }\right\vert <\frac{c}{2^{n}},\text{ }\forall n=1,2,...
\end{equation}
Therefore the series in the right hand side of (19)\ converges to some
element, denoted by $\Psi_{\gamma+t},$ of $L_{2}(F)$ and $\Psi_{\gamma+t}$
satisfies (6). Moreover, $\Delta\Psi_{\gamma+t}$ $\in L_{2}(F)$ and
\begin{equation}
A(\gamma)\Psi_{\gamma+t}=\Psi_{\gamma+t}-e^{i\left\langle \gamma
+t,x\right\rangle }).
\end{equation}
It with (22) and (8) implies that
\begin{equation}
q\Psi_{\gamma+t}=(\Delta+\mid\gamma+t\mid^{2}I)(\Psi_{\gamma+t}%
-e^{i\left\langle \gamma+t,x\right\rangle })=\Delta\Psi_{\gamma+t}+\mid
\gamma+t\mid^{2}\Psi_{\gamma+t},
\end{equation}
that is, $\Psi_{\gamma+t}$ is an eigenfunction of $L_{t}(q)$ corresponding to
$\mid\gamma+t\mid^{2}.$

Now we consider the case (15)-(17). Using (17) and (2) and arguing as in the
proof of (38) we see that in (38) one can replace $\gamma$ by $b_{j}$ for
$j=1,2,...,s.$ Therefore repeating the proof of (39) and (40) one can easily
verify that the functions $\Psi_{b_{j}+t}(x)$\ defined by (21) for
$j=1,2,...,s$ are the eigenfunctions corresponding to the eigenvalue
$\lambda=\mid b_{j}+t\mid^{2}$
\end{proof}

Now we consider the Fourier decomposition of the Bloch functions. Let
$\Psi_{\gamma+t}(x)$ be an arbitrary eigenfunction of $L_{t}(q)$ corresponding
to the the simple eigenvalue $\mid\gamma+t\mid^{2}$ of $L_{t}(0),$ that is,
(14) holds. Since $\left\{  e^{i\left\langle \gamma+\delta+t,x\right\rangle
}:\delta\in\Gamma\right\}  $ is an orthonormal basis we have
\begin{equation}
\Psi_{\gamma+t}(x)=(\Psi_{\gamma+t},e^{i\left\langle \gamma+t,x\right\rangle
})e^{i\left\langle \gamma+t,x\right\rangle }+\sum_{\delta\in\Gamma
\backslash\left\{  0\right\}  }(\Psi_{\gamma+t},e^{i\left\langle \gamma
+\delta+t,x\right\rangle })e^{i\left\langle \gamma+\delta+t,x\right\rangle }.
\end{equation}
To find $(\Psi_{\gamma+t},e^{i\left\langle \gamma+\delta+t,x\right\rangle })$
for $\delta\neq0,$ we use the formula
\begin{equation}
(\Psi_{\gamma+t},e^{i\left\langle \gamma+\delta+t,x\right\rangle })=\frac
{1}{d(\gamma,\delta)}\sum_{\gamma_{1}\in\Gamma(k+)}q_{\gamma_{1}}(\Psi
_{\gamma+t},e^{i\left\langle \gamma+\delta-\gamma_{1}+t,x\right\rangle })
\end{equation}
obtained from (25) by replacing $\Lambda_{N}(t),\Psi_{N,t}$ and $\gamma$
by$\mid\gamma+t\mid^{2},$ $\Psi_{\gamma+t}$ and $\gamma+\delta$ respectively,
where
\begin{equation}
d(\gamma,\delta)=\mid\gamma+t\mid^{2}-\mid\gamma+\delta+t\mid^{2}\neq0,\text{
}\forall\delta\neq0.
\end{equation}
Now iterating (42) we obtain the Fourier decomposition of $\Psi_{\gamma+t}$ in
the next theorem. Note that in the iteration we take into account the following.

\begin{remark}
By definition of $\Gamma(k)$, for $\delta\in\Gamma\backslash\left\{
0\right\}  $ there exist $a\in\Gamma(k)$ and $p\in\mathbb{Z}$, such that
\begin{equation}
\delta=a+pv_{k}\in\Gamma(k,p),
\end{equation}
where $\Gamma(k,p)$ is defined in (16). Let us consider the cases: $p\leq0$
and $p>0.$

\textbf{Case 1}: Let $p\leq0.$ If $\gamma_{1}\in\Gamma(k+),\gamma_{2}\in
\Gamma(k+),...,$ then $\gamma_{1}=a_{1}+p_{1}v_{k},$ $\gamma_{2}=a_{2}%
+p_{2}v_{k},...,$ where $a_{1}\in\Gamma(k),$ $a_{2}\in\Gamma(k),...$ and
$p_{1}>0,$ $p_{2}>0,...$ Therefore by (44) we have%
\[
\delta-\gamma(j)=u_{j}+s_{j}v_{k},s_{j}<0,\forall j=1,2,...,
\]
where $u_{j}\in\Gamma(k)$ and $\gamma(j)$ is defined in (28). It with (43)
implies that
\begin{equation}
\delta-\gamma(j)\neq0\text{ }\And d(\gamma,\delta-\gamma(j))\neq0,\text{
}\forall j=1,2,...\text{.}%
\end{equation}

\textbf{Case 2}: Let $p>0$ and $\gamma_{1}\in\Gamma(k+),\gamma_{2}\in
\Gamma(k+),....$ Then arguing as in \textbf{Case 1 }we obtain that
\begin{equation}
\left(  \delta-\gamma(p)\right)  \in\Gamma(k,s)\text{ }\And\text{ }s\leq0.
\end{equation}

\end{remark}

\begin{theorem}
If $\Psi_{\gamma+t}(x)$ is an eigenfunction of $L_{t}(q)$ corresponding to the
simple eigenvalue $\mid\gamma+t\mid^{2}$ of $L_{t}(0),$ then it can be
normalized by (18) and satisfies \textit{ }%
\begin{align}
\text{ }\Psi_{\gamma+t}(x)  &  =e^{i\left\langle \gamma+t,x\right\rangle
}+\sum\limits_{\delta\in\Gamma(k+)}c(\gamma,\delta)e^{i\left\langle
\gamma+\delta+t,x\right\rangle }=\nonumber\\
&  e^{i\left\langle \gamma+t,x\right\rangle }+\sum_{p\in\mathbb{N}}%
\sum\limits_{\delta\in\Gamma(k,p)}c(\gamma,\delta)e^{i\left\langle
\gamma+\delta+t,x\right\rangle },
\end{align}
where $c(\gamma,\delta)$ for $\delta\in\Gamma(k,p)$ and $p\in\mathbb{N}$ is
defined by
\begin{equation}
c(\gamma,\delta)=\frac{1}{d(\gamma,\delta)}\left(  q_{\delta}+\sum_{j=1}%
^{p-1}\sum_{\gamma_{1},\gamma_{2},...,\gamma_{j}}\frac{q_{\gamma_{1}}%
q_{\gamma_{2}}...q_{\gamma_{j}}q_{\delta-\gamma(j)}}{d(\gamma,\delta
-\gamma_{1})d(\gamma,\delta-\gamma(2))...d(\gamma,\delta-\gamma(j))}\right)
\end{equation}
and the summations is taken under conditions
\begin{equation}
\left\{  \gamma_{1},\gamma_{2},...,\gamma_{j},\delta-\gamma(j)\right\}
\in\Gamma(k+),\forall j=1,2,...,p-1
\end{equation}
Moreover, the right-hand sides of (19) and (47) are the same.
\end{theorem}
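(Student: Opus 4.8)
The plan is to read the Fourier coefficients of $\Psi_{\gamma+t}$ straight off the recursion (42), sorting $\Gamma\setminus\{0\}$ by the sign of the $v_k$-coordinate exactly as in Remark 3. Writing $\Psi_{\gamma+t}$ in the orthonormal basis as in (41), put $c_0:=(\Psi_{\gamma+t},e^{i\langle\gamma+t,x\rangle})$. Every coefficient computed below turns out to be an explicit scalar multiple of $c_0$, so if $c_0=0$ then all coefficients vanish and $\Psi_{\gamma+t}\equiv0$, contradicting that it is an eigenfunction; hence $c_0\neq0$ and we may rescale to obtain the normalization (18). From now on $c_0=1$.

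First I would kill the coefficients that must be absent from (47). Let $\delta=a+pv_k$ with $p\le0$ and $\delta\neq0$. Iterating (42) $m$ times and using Case 1 of Remark 3, every intermediate vector $\delta-\gamma(j)$ keeps a strictly negative $v_k$-coordinate, so by (45) none of the denominators vanish and the iteration may be carried out indefinitely. The resulting bound is of the same type as in the proof of Theorem 2: the numerator is dominated by $M^m$ through (4), while Proposition 1 forces $|\gamma+\delta-\gamma(j)+t|$ to grow like $c(k)j$, so the product of denominators blows up and the right-hand side tends to $0$ as $m\to\infty$. Hence $(\Psi_{\gamma+t},e^{i\langle\gamma+\delta+t,x\rangle})=0$ whenever $\delta\notin\Gamma(k+)\cup\{0\}$. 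This collapses (41) to a sum over $\Gamma(k+)$ and gives the first equality of (47); the second is the regrouping $\Gamma(k+)=\bigcup_{p\in\mathbb{N}}\Gamma(k,p)$.

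The heart of the proof is the evaluation of $c(\gamma,\delta)$ for $\delta\in\Gamma(k,p)$ with $p\ge1$. Here I would iterate (42) and sort the terms at each stage. One application turns the left-hand side into $\frac{1}{d(\gamma,\delta)}\sum_{\gamma_1\in\Gamma(k+)}q_{\gamma_1}(\Psi_{\gamma+t},e^{i\langle\gamma+\delta-\gamma_1+t,x\rangle})$, and the summands fall into three classes according to the $v_k$-coordinate of $\delta-\gamma_1$: the single index $\gamma_1=\delta$ lands on $e^{i\langle\gamma+t,x\rangle}$ and contributes the terminal value $q_\delta/d(\gamma,\delta)$; indices with $\delta-\gamma_1$ of non-positive $v_k$-coordinate contribute $0$ by the Case 1 result just proved; and indices with $\delta-\gamma_1\in\Gamma(k+)$ are reinserted into (42). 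By Case 2 of Remark 3 each factor lowers the $v_k$-coordinate by at least one, so after at most $p$ stages no ``live'' term remains and the expansion terminates with finite depth. Collecting the terminal contributions, one for each chain $\gamma_1,\dots,\gamma_j,\delta-\gamma(j)$ that reaches $e^{i\langle\gamma+t,x\rangle}$, reproduces precisely the bracket of (48). I would also note that the only constraint needed is (49): if $\delta-\gamma(j)\in\Gamma(k+)$ then each earlier $\delta-\gamma(i)$ with $i<j$ has a strictly larger, hence positive, $v_k$-coordinate and so automatically lies in $\Gamma(k+)$. Absolute convergence of each fixed-depth inner sum follows from (4) and the denominator growth of Proposition 1, as in estimate (38).

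For the final assertion I would match (19) and (47) coefficientwise. Expanding $(A(\gamma))^n e^{i\langle\gamma+t,x\rangle}$ by (37), the coefficient of $e^{i\langle\gamma+\delta+t,x\rangle}$ in (19) is the sum over ordered chains $\gamma_1,\dots,\gamma_n\in\Gamma(k+)$ with $\gamma_1+\cdots+\gamma_n=\delta$ of $q_{\gamma_1}\cdots q_{\gamma_n}/\prod_{s=1}^n d(\gamma,\gamma(s))$. The involution $(\gamma_1,\dots,\gamma_r)\mapsto(\gamma_r,\dots,\gamma_1)$ on such chains carries the partial sums $\gamma(s)$ of one chain onto the co-sums $\delta-\gamma(i)$ appearing in (48), while leaving the product $q_{\gamma_1}\cdots q_{\gamma_r}$ unchanged; hence the two denominator products coincide factor for factor and (19), (47) have identical Fourier coefficients, so they are the same $L_2(F)$ function (convergent by Theorem 3). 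I expect the third step to be the main obstacle: one must argue cleanly that the infinite recursion (42) halts after finitely many stages because of the $v_k$-grading, isolate exactly which surviving chains contribute, check that the automatically-satisfied intermediate memberships give precisely the constraint (49) and nothing larger, and keep the infinite fixed-depth inner sums absolutely convergent.
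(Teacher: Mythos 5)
Your proposal is correct, and for the core of the theorem it follows essentially the same route as the paper: you first iterate (42) along Case 1 of the remark preceding the theorem to kill all Fourier coefficients $(\Psi_{\gamma+t},e^{i\left\langle \gamma+\delta+t,x\right\rangle })$ with $\delta\in\Gamma(k,p)$, $p\leq0$, $\delta\neq0$, using the same decay estimate as in the proof of Theorem 2; you then iterate (42) for $p>0$, peeling off at each stage the terms that land on $e^{i\left\langle \gamma+t,x\right\rangle }$, and observe that the $v_{k}$-grading forces the recursion to terminate after at most $p$ steps, which is exactly the paper's derivation of (52)--(53); and you deduce $(\Psi_{\gamma+t},e^{i\left\langle \gamma+t,x\right\rangle })\neq0$ and the normalization (18) just as the paper does. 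The one place you genuinely diverge is the final assertion that the right-hand sides of (19) and (47) coincide: the paper gets this for free from uniqueness (every eigenfunction normalized by (18) has the form (47), and by Theorem 3 the series (19) is such an eigenfunction), whereas you match Fourier coefficients directly, expanding $\left(  A(\gamma)\right)  ^{n}e^{i\left\langle \gamma+t,x\right\rangle }$ via (37) and using the chain-reversal involution $(\gamma_{1},...,\gamma_{n})\mapsto(\gamma_{n},...,\gamma_{1})$ to carry the partial sums $\gamma(s)$ onto the co-sums $\delta-\gamma(i)$ of (48). Your involution is correct (the two denominator products agree factor by factor, and only chains of length at most $p$ contribute for $\delta\in\Gamma(k,p)$, so all sums are finite in depth), and it has the merit of making the identity of the two formulas combinatorially explicit rather than relying on simplicity of the eigenvalue; the paper's uniqueness argument is shorter but less informative. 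Your side remark that membership of $\delta-\gamma(j)$ in $\Gamma(k+)$ automatically forces all intermediate $\delta-\gamma(i)$, $i<j$, into $\Gamma(k+)$ (so the denominators in (48) are nonzero and (49) is the only constraint needed) is a point the paper leaves implicit, and it is worth stating.
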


\begin{proof}
First let us consider \textbf{Case }1 of Remark 2. Then $\delta\in
\Gamma\backslash\left\{  0\right\}  $ has the form (44) and $p\leq0,$ that is,
$\delta\in\Gamma(k,p)$ and $p\leq0.$ In this case iterating (42) $m$ times and
taking into account (45) we obtain
\begin{equation}
(\Psi_{\gamma+t},e^{i\left\langle \gamma+\delta+t,x\right\rangle }%
)=\sum_{\gamma_{1},\gamma_{2},...,\gamma_{m}}\left(  \frac{q_{\gamma_{1}%
}q_{\gamma_{2}}...q_{\gamma_{m+1}}(\Psi_{\gamma+t},e^{i\left\langle
\gamma+\delta-\gamma(m+1)+t,x\right\rangle })}{d(\gamma,\delta)d(\gamma
,\delta-\gamma_{1})...d(\gamma,\delta-\gamma(m))}\right)  .
\end{equation}
Moreover, repeating the arguments used in the proof of the statement that the
right hand side of (28) approaches zero (see the proof of Theorem 2) we obtain
that the right hand side of (50) approaches to zero as $m\rightarrow\infty,$
and hence
\begin{equation}
(\Psi_{\gamma+t},e^{i\left\langle \gamma+\delta+t,x\right\rangle })=0,\text{
}\forall\delta\in\Gamma(k,p),\text{ }p\leq0,\text{ }\delta\neq0.
\end{equation}

Now we consider \textbf{Case }2 of Remark 2. Then $\delta$ has the form (44)
and $p>0.$ In this case we iterate (42) as follows. Isolate the terms in the
right-hand side of (42) containing the multiplicand $(\Psi_{\gamma
+t},e^{i\left\langle \gamma+t,x\right\rangle })$ which occurs in the case
$\gamma_{1}=\delta$ and use (42) for the other terms to get
\[
(\Psi_{\gamma+t},e^{i\left\langle \gamma+\delta+t,x\right\rangle }%
)=\frac{q_{\delta}}{d(\gamma,\delta)}(\Psi_{\gamma+t},e^{i\left\langle
\gamma+t,x\right\rangle })+\sum_{\gamma_{1},\gamma_{2}}\frac{q_{\gamma_{1}%
}q_{\gamma_{2}}(\Psi_{\gamma+t},e^{i\left\langle \gamma+\delta-\gamma
_{1}-\gamma_{2}+t,x\right\rangle })}{d(\gamma,\delta)d(\gamma,\delta
-\gamma_{1})}.
\]
Isolating again the terms containing the multiplicand $(\Psi_{\gamma
+t},e^{i\left\langle \gamma+t,x\right\rangle })$ which occurs in the case
$\gamma_{2}=\delta-\gamma_{1}$ and using again (42) for the other terms \ and
repeating this process $p-1$ times we obtain%
\begin{align}
(\Psi_{\gamma+t},e^{i\left\langle \gamma+\delta+t,x\right\rangle })  &
=c(\gamma,\delta)(\Psi_{\gamma+t},e^{i\left\langle \gamma+t,x\right\rangle
})+\\
&  \sum_{\gamma_{1},\gamma_{2},...,\gamma_{p}}\frac{q_{\gamma_{1}}%
q_{\gamma_{2}}...q_{\gamma_{p}}(\Psi_{\gamma+t},e^{i\left\langle \gamma
+\delta-\gamma(p)+t,x\right\rangle })}{d(\gamma,\delta)d(\gamma,\delta
-\gamma_{1})...d(\gamma,\delta-\gamma(p-1))},\nonumber
\end{align}
where $c(\gamma,\delta)$ is defined in (48) and (49), $\delta-\gamma(p)\neq0$
and by (46) $\delta-\gamma(p)\in\Gamma(k,s),$ $s\leq0.$ Therefore it follows
from (51) that the second term of the right-hand side of (52) is zero and
hence we have
\begin{equation}
(\Psi_{\gamma+t},e^{i\left\langle \gamma+\delta+t,x\right\rangle }%
)=c(\gamma,\delta)(\Psi_{\gamma+t},e^{i\left\langle \gamma+t,x\right\rangle
}).
\end{equation}

Since the systems $\left\{  e^{i\left\langle \gamma+t,x\right\rangle }%
:\gamma\in\Gamma\right\}  $ is the orthonormal basis in $L_{2}(F)$ and
$\left\Vert \Psi_{\gamma+t}\right\Vert \neq0$, formulas (41), (51) and (53)
imply that $(\Psi_{\gamma+t},e^{i\left\langle \gamma+t,x\right\rangle }%
)\neq0.$ Hence, there exists eigenfunction, denoted again by $\Psi_{\gamma
+t},$ satisfying (18). Now using (41), (51), (53) and the obvious relation
$\Gamma(k+)=\cup_{p\in\mathbb{N}}\Gamma(k,p)$ we get the proof of (47). Thus
we have proved that any eigenfunction normalized by (18) has the form (47). It
implies that there is only one eigenfunction normalized by (18) and hence the
right-hand sides of (19) and (47) are the same. The theorem is proved
\end{proof}

Now we consider two and three dimensional case.

\begin{theorem}
In the case $d=2$ and $d=3$ the results of Theorem 1 continue to hold if
condition (4) is replaced by $q\in L_{2}(F).$
\end{theorem}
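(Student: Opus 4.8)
The plan is to notice that hypothesis (4) enters the arguments in only three places—the bound (33) behind the convergence in Theorem 2, the geometric estimate (38) in Theorem 3, and the analogous estimate behind (50)-(52) in Theorem 4—and to re-derive each of these using only $\|q\|_{L_{2}(F)}^{2}=\sum_{\gamma\in\Gamma(k+)}|q_{\gamma}|^{2}<\infty$ in place of the $\ell^{1}$ bound $M$. Since Theorem 1 is deduced from Theorems 2, 3, 4, it suffices to re-establish those three under $q\in L_{2}(F)$. The single tool replacing the crude estimate $\sum|q_{\gamma_{1}}\cdots q_{\gamma_{m}}|\le M^{m}$ is the Cauchy–Schwarz inequality: every frequency sum $\sum_{\gamma_{1}\in\Gamma(k+)}|q_{\gamma_{1}}|\,g(\gamma_{1})$ is bounded by $\|q\|_{L_{2}}\big(\sum_{\gamma_{1}}|g(\gamma_{1})|^{2}\big)^{1/2}$, which turns each summation against a potential coefficient into a lattice sum of squared reciprocal denominators.

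First I would recast $A(\gamma)$ (see (20), (22)) as the composition $R_{\gamma}M_{q}$, where $M_{q}$ is multiplication by $q$ and $R_{\gamma}=(\,|\gamma+t|^{2}I+\Delta)^{-1}$ is the free resolvent on the subspace $E(\gamma)$. For $\eta\in\Gamma(k,p)$ call $p$ its level (see (16), (44)); note each application of $A(\gamma)$ raises the level by at least one, so $(A(\gamma))^{n}e^{i\left\langle \gamma+t,x\right\rangle }$ is supported at level $\ge n$. The central estimate is the Hilbert–Schmidt norm of the restriction of $A(\gamma)$ to inputs of level $\ge L$: writing $A(\gamma)$ in the basis $\{e^{i\left\langle b+t,x\right\rangle }\}$ and using $\sum_{b}|q_{b'-b}|^{2}\le\|q\|_{L_{2}}^{2}$, one obtains
\[
\big\|A(\gamma)\big|_{\mathrm{lev}\ge L}\big\|_{HS}^{2}\le\|q\|_{L_{2}}^{2}\!\!\sum_{\eta\in\Gamma(k+),\,\mathrm{lev}(\eta)\ge L+1}\frac{1}{|d(\gamma,\eta)|^{2}}.
\]
Because the resonance set $\{\eta:|\eta+\gamma+t|=|\gamma+t|\}$ is bounded, Proposition 1 shows that once $L$ exceeds a threshold $L_{0}\sim|\gamma+t|/\|h_{k}\|$ the cone avoids it and $|d(\gamma,\eta)|\gtrsim|\eta|^{2}$; the remaining sum is then dominated by the tail of $\sum_{\eta\in\Gamma}(1+|\eta|^{2})^{-2}$, which is finite and tends to $0$ as $L\to\infty$ exactly when $d<4$, i.e.\ $d=2,3$. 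Hence $\|A(\gamma)|_{\mathrm{lev}\ge L}\|_{op}\le C\|q\|_{L_{2}}L^{(d-4)/2}\to0$.

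Chaining this estimate along the level filtration gives
\[
\big\|(A(\gamma))^{n}e^{i\left\langle \gamma+t,x\right\rangle }\big\|\le\Big(\prod_{k=L_{0}}^{n-1}C\|q\|_{L_{2}}k^{(d-4)/2}\Big)\big\|(A(\gamma))^{L_{0}}e^{i\left\langle \gamma+t,x\right\rangle }\big\|.
\]
Since the per-step factor $C\|q\|_{L_{2}}k^{(d-4)/2}\to0$, this product decays factorially, so the series (19) converges in $L_{2}(F)$ for every $q\in L_{2}(F)$ with no smallness assumption on $\|q\|_{L_{2}}$; this replaces (38) and yields Theorem 3. The identical mechanism—with the uniform lower bound (32) handling the finitely many low levels—replaces (33)-(36) in Theorem 2, and the same per-level operator bound controls the iterated sums (50)-(52) in Theorem 4.

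The main obstacle is exactly the interplay isolated above: because $q\in L_{2}(F)\setminus L_{\infty}(F)$ makes $M_{q}$ unbounded, one cannot factor $\|R_{\gamma}M_{q}\|\le\|R_{\gamma}\|\,\|M_{q}\|$, so the decay must be extracted from the composition $R_{\gamma}M_{q}$ as a whole through its Hilbert–Schmidt norm, and this is viable only when $\sum_{\eta\in\Gamma}(1+|\eta|^{2})^{-2}<\infty$, which forces $d\le3$. The secondary difficulty is the small-divisor behaviour of $d(\gamma,\eta)$ at low levels; I would dispose of it by the boundedness of the resonance sphere together with (14) (for Theorems 3 and 4) and with hypothesis (32) (for Theorem 2), both of which furnish a positive lower bound on the finitely many low-level denominators, so that the factorial decay coming from the high levels governs the convergence.
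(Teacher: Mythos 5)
Your proposal follows essentially the same route as the paper: there too, the $\ell^{1}$ bound (4) is replaced by the square-summability (54) together with the estimates (55) and (57) on the sums of squared reciprocal denominators (finite precisely because $d\le 3$), and the iterations behind Theorems 2--4 are rerun with the Schwarz inequality, the per-step factor $O(s^{-1/2})$ from (57) producing exactly the factorial decay you describe. One presentational caveat: your displayed ``Hilbert--Schmidt'' bound is really a uniform column bound (the genuine HS norm over infinitely many input frequencies would diverge), so in the chaining step one should track the $\ell^{1}$ norm of the Fourier coefficients of $(A(\gamma))^{n}e^{i\left\langle \gamma+t,x\right\rangle }$ through the iterated Cauchy--Schwarz, as the paper implicitly does, rather than invoke an operator-norm bound for the level-restricted $A(\gamma)$.
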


\begin{proof}
If $q\in L_{2}(F),$ then we have
\begin{equation}
\sum_{\gamma\in\Gamma(k\pm)}\mid q_{\gamma}\mid^{2}<\infty.
\end{equation}
On the other hand, it is clear that if $d=2,3$ and (14) holds, then
\begin{equation}
\sum_{\gamma_{1}\in\Gamma(k+)}\left(  \frac{1}{\mid\gamma+t\mid^{2}-\mid
\gamma+\gamma_{1}+t\mid^{2}}\right)  ^{2}<\infty.
\end{equation}
The inequalities (54) and \ (55) and the Schwarz inequality for $l_{2}$ imply
that
\begin{equation}
\left\vert A(\gamma)e^{i\left\langle \gamma+t,x\right\rangle }\right\vert
<\infty,\text{ }\forall x.
\end{equation}
Moreover, using (34) one can easily verify that if (14) holds, then for fixed
$\gamma_{1},\gamma_{1},...,\gamma_{s-1}$ from $\Gamma(k+)$ the following
relation
\begin{equation}
\sum_{\gamma_{s}\in\Gamma(k+)}\left(  \frac{1}{\mid\gamma+t\mid^{2}-\mid
\gamma+\gamma_{1}+\gamma_{2}+...+\gamma_{s}+t\mid^{2}}\right)  ^{2}=O(s^{-1}).
\end{equation}
is satisfied. The relations (55)-(57) continue to hold if $\gamma$ is replaced
by $b_{j}$ for $j=1,2,...,s$ and (15)-(17) are satisfied. Therefore instead of
(4) and (34) using (54) and (57), respectively and repeating the proof of
Theorem 1 by using the Schwarz inequality for $l_{2},$ we get the proof of the theorem
\end{proof}

\section{On the Root Functions for the Multiple Eigenvalues}

In this chapter we consider the root function of $L_{t}(q)$ corresponding to
the multiple eigenvalues (15) and find the necessary and sufficient conditions
on the potential which provide some root functions to be eigenfunctions. For
this we introduce the following notation.

\begin{notation}
The integers $p_{1}\geq p_{2}\geq...$ defined in (16), in general, are not
different. There are $p,$ where $p\leq m,$ different numbers among them
denoted by $n_{1}>n_{2}>...>n_{p}.$ Then the vectors $b_{1},b_{2},...,b_{m}$
defined in (15) belong to $\Gamma(k,n_{1}),\Gamma(k,n_{2}),...$ and
$\Gamma(k,n_{p})$ (see (16) for the definition of $\Gamma(k,n_{l})$) which are
the points of the lattice $\Gamma$ lying on the parallel hyperplanes
$P(k,n_{1}),P(k,n_{2}),...$ and $P(k,n_{p})$ called as first, second and
$p$-th planes respectively, where $P(k,n_{l})=\left\{  x+n_{l}v_{k}:x\in
P(k)\right\}  $ and $P(k)$ is the hyperplane generated by the vectors
$v_{1},v_{2},...,v_{k-1},v_{k+1},...,v_{d}.$ We redenote the vectors
$b_{1},b_{2},...,b_{m}$ by $b_{l,j}$ for $j=1,2,...,s_{l}$ if $b_{l,j}%
\in\Gamma(k,n_{l}).$ Let $H(n_{l})$ and $H(n_{l},j)$ be respectively the
spaces spanned by%
\[
\left\{  e^{i\left\langle a+nv_{k}+t,x\right\rangle }:a\in\Gamma
(k),n>n_{l}\right\}  \And\left\{  e^{i\left\langle a+nv_{k}+t,x\right\rangle
}:a\in\Gamma(k),n>n_{l}\right\}  \cup\left\{  e^{i\left\langle b_{l,j}%
+t,x\right\rangle }\right\}  .
\]

\end{notation}

\begin{remark}
By Notation 1, $b_{l,j}$ belongs to $l$-th plane and hence has the form
\begin{equation}
b_{l,j}=a_{l,j}+n_{l}v_{k},\text{ }a_{l,j}\in\Gamma(k).
\end{equation}
Besides, comparing Notations 1 with the notations of (16) and (17) we see that
$s_{1}=s$ and $b_{i}\in\Gamma(k,n_{1})$ for $i=1,2,...,s,$ that is,
$b_{i}=a_{i}+n_{1}v_{k},$ $a_{i}\in\Gamma(k)$ for $i=1,2,...,s.$ Then by (15)
and (58) we have the equalities
\begin{equation}
\lambda=\mid a_{i}+n_{1}v_{k}+t\mid^{2}=\mid a_{l,j}+n_{l}v_{k}+t\mid^{2}%
\end{equation}
for all $l=1,2,...,p$ $;$ $j=1,2,...,s_{l}$ and $i=1,2,...,s$ that will be
used in the future.
\end{remark}

The following statements follows from the definitions of $\Gamma(k+)$,
$S(k+),$ $H(n_{l})$ and $H(n_{l},j)$ by using Theorem 1(a) and the approach of
the proof of Theorem 2.2 of [8]

\begin{theorem}
If $q\in S(k+),$ then the followings hold:

$(a)$ The operator $L_{t}(q)$ is invariant in $H(n_{l})$ and $H(n_{l},j)$ for
all $l$ and $j.$

$(b)$ For all $l$ and $\gamma\in\Gamma(k,n_{l})$ the number of linearly
independent root functions (eigenfunctions and associated functions) of the
operator $L_{t}(\varepsilon q)$ corresponding to the eigenvalue $\mid
\gamma+t\mid^{2}$ and lying in $H(n_{l})$ does not depend on $\varepsilon
\in\mathbb{C}.$ The statement continue to hold if $\Gamma(k,n_{l})$ and
$H(n_{l})$ is replaced by $\gamma\in\left(  \Gamma(k,n_{l})\cup\left\{
b_{j,l}\right\}  \right)  $ and $H(n_{l},j)$ respectively.

$(c)$ For all $l$ and $j$ the operator $L_{t}(q)$ has a root function of the
form%
\begin{equation}
\varphi_{l,j}(x)=e^{i\left\langle a_{l,j}+n_{l}v_{k}+t,x\right\rangle }+%
{\textstyle\sum\limits_{n=n_{l}+1}^{\infty}}
\left(
{\textstyle\sum\limits_{a\in\Gamma(k)}}
c(a,n)e^{i\left\langle a+nv_{k}+t,x\right\rangle }\right)
\end{equation}
where $c(a,n)=(\varphi_{l,j},e^{i\left\langle a+nv_{k}+t,x\right\rangle }).$
\end{theorem}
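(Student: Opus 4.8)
The plan is to exploit the grading of $L_{2}(F)$ by the $v_{k}$-component. For $n\in\mathbb{Z}$ let $V_{n}$ be the closed span of $\{e^{i\left\langle a+nv_{k}+t,x\right\rangle }:a\in\Gamma(k)\}$, so that $L_{2}(F)=\bigoplus_{n\in\mathbb{Z}}V_{n}$, the free operator $L_{t}(0)=-\Delta$ preserves each $V_{n}$ and acts there as multiplication by $\mid a+nv_{k}+t\mid^{2}$, while multiplication by any $q\in S(k+)$ strictly raises the index $n$, because by (2)--(3) every Fourier mode of $q$ carries a factor $e^{i\left\langle u+n'v_{k},x\right\rangle }$ with $n'\geq1$. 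Consequently $L_{t}(q)V_{n}\subseteq\bigoplus_{n'\geq n}V_{n'}$, i.e. $L_{t}(q)$ is upper triangular with respect to this grading. Part $(a)$ is then immediate: $H(n_{l})=\bigoplus_{n>n_{l}}V_{n}$ is carried into itself because $-\Delta$ leaves each plane fixed and $q$ only increases $n$, and $H(n_{l},j)=H(n_{l})\oplus\mathbb{C}e^{i\left\langle b_{l,j}+t,x\right\rangle }$ is invariant because $-\Delta$ fixes the extra mode $e^{i\left\langle b_{l,j}+t,x\right\rangle }$ (which lies on the plane $n_{l}$), while $q$ sends it into $H(n_{l})$.

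For part $(b)$, since $\sum\mid q_{\gamma}\mid=M<\infty$ the multiplication by $q$ is bounded, so $L_{t}(\varepsilon q)=L_{t}(0)+\varepsilon q$ is an entire family in $\varepsilon$; by $(a)$ its restriction $T_{\varepsilon}:=L_{t}(\varepsilon q)\mid_{H(n_{l})}$ is a well-defined analytic family on $H(n_{l})$. The first step is to pin down $\sigma(T_{\varepsilon})$. Rerunning the iteration (23)--(28) and the convergence estimate of Theorem 2 with $\gamma$ restricted to the planes $n>n_{l}$ is legitimate, because all shifts $\gamma_{1},\gamma_{2},\dots\in\Gamma(k+)$ and Proposition 1 applies verbatim; this shows that every eigenvalue of $T_{\varepsilon}$ equals some $\mid a+nv_{k}+t\mid^{2}$ with $a\in\Gamma(k)$, $n>n_{l}$, while conversely the series (19) built from a mode on a plane $n>n_{l}$ stays in $H(n_{l})$, so by Theorem 3 each such number is an eigenvalue. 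Thus $\sigma(T_{\varepsilon})=\{\mid a+nv_{k}+t\mid^{2}:a\in\Gamma(k),\,n>n_{l}\}$ is a discrete set independent of $\varepsilon$, which is Theorem 1$(a)$ localized to $H(n_{l})$. Fixing $\lambda=\mid\gamma+t\mid^{2}$ and choosing a small circle $C$ enclosing $\lambda$ and no other point of this set gives $C\subset\rho(T_{\varepsilon})$ for every $\varepsilon\in\mathbb{C}$; the finitely many resonances on the planes $n>n_{l}$ make the Riesz projection $P_{\varepsilon}=\frac{1}{2\pi i}\oint_{C}(z-T_{\varepsilon})^{-1}\,dz$ of finite rank, and $\dim\operatorname{ran}P_{\varepsilon}$ equals the number of linearly independent root functions of $T_{\varepsilon}$ for $\lambda$ lying in $H(n_{l})$. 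Since $P_{\varepsilon}$ is norm-continuous (indeed analytic) in $\varepsilon$, this integer is constant, proving $(b)$; the claim for $H(n_{l},j)$ follows by the identical argument applied to $L_{t}(\varepsilon q)\mid_{H(n_{l},j)}$.

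For part $(c)$, consider the invariant pair $H(n_{l},j)\supset H(n_{l})$ from $(a)$, whose quotient is one-dimensional, spanned by the image of $e^{i\left\langle b_{l,j}+t,x\right\rangle }$, on which $L_{t}(q)$ acts as multiplication by $\lambda=\mid b_{l,j}+t\mid^{2}$ (recall $b_{l,j}=a_{l,j}+n_{l}v_{k}$ and (59)). By additivity of algebraic multiplicity along this filtration, equivalently by comparing the Riesz projections of $L_{t}(q)\mid_{H(n_{l},j)}$ and $L_{t}(q)\mid_{H(n_{l})}$ around $\lambda$, the root subspace of $\lambda$ in $H(n_{l},j)$ has dimension one larger than that in $H(n_{l})$. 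Hence there is a root function $\varphi_{l,j}\in H(n_{l},j)$ whose component along $e^{i\left\langle b_{l,j}+t,x\right\rangle }$ is nonzero; normalizing that component to $1$ and noting that every other basis mode of $H(n_{l},j)$ lies on a plane $n>n_{l}$, we obtain exactly the form (60) with $c(a,n)=(\varphi_{l,j},e^{i\left\langle a+nv_{k}+t,x\right\rangle })$.

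The main obstacle is the step in $(b)$ that identifies $\sigma(T_{\varepsilon})$ as a fixed discrete set independent of $\varepsilon$, since this is what keeps the contour $C$ in $\rho(T_{\varepsilon})$ uniformly in $\varepsilon$ and thereby licenses the continuity-of-the-integer argument. Once this localized version of Theorem 1$(a)$ is in hand, obtained by rerunning the estimates of Theorems 2 and 3 inside $H(n_{l})$ where the triangular structure guarantees that all intermediate shifts remain in $\Gamma(k+)$, the remaining analytic-perturbation and composition-series arguments are routine.
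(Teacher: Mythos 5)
Your proposal is correct and follows essentially the same route as the paper: invariance of $H(n_{l})$ and $H(n_{l},j)$ from the half-space support of the Fourier coefficients of $q$, constancy in $\varepsilon$ of the rank of the Riesz projection $\frac{1}{2\pi i}\oint_{C}(z-L_{t}(\varepsilon q))^{-1}dz$ over a contour avoiding the fixed spectrum, and a dimension comparison between $H(n_{l},j)$ and $H(n_{l})$ to produce $\varphi_{l,j}$ with nonzero component along $e^{i\left\langle b_{l,j}+t,x\right\rangle }$. The only (harmless) deviations are that you re-derive the spectrum identification for the restricted operators $T_{\varepsilon}$ inside $H(n_{l})$ instead of citing Theorem 1(a) for the full operator, and you obtain $(c)$ from the one-dimensional quotient $H(n_{l},j)/H(n_{l})$ rather than from $(b)$ evaluated at $\varepsilon=0$; both are minor refinements of the same argument.
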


\begin{proof}
$(a)$ Let $Q$ be the subset of of the lattice $\Gamma$ such that $q_{\gamma
}\neq0$ if $\gamma\in Q.$ By (3) we have $Q\subset\Gamma(k+).$ Therefore if
$f\in H(n_{l},j)$, then $qf\in H(n_{l})\subset H(n_{l},j),$ that yields $(a).$

$(b)$ Now from $(a)$ and Theorem 1(a) arguing as in the proof of Theorem 2.2
of [8] we get the proof of $(b).$ Namely, we argue as follows. Let $D$ be
small disk with the center $\mid\gamma+t\mid^{2},$ where $\gamma\in
\Gamma(k,n_{l}),$ and containing no other eigenvalue of $L_{t}(0).$ By Theorem
1(a) $D$ contains no eigenvalue of $L_{t}(\varepsilon q)$ except $\mid
\gamma+t\mid^{2}.$ Therefore the projection of $L_{t}(\varepsilon q)$ defined
by contour integration over the boundary of $D$ depend continuously on
$\varepsilon$ which imply the proof of $(b)$ for the space $H(n_{l}).$ The
proof for $H(n_{l},j)$ is the same.

$(c)$ By Notation 1 the operator $L_{t}(0)$ in $H(n_{l})$ and $H(n_{l},j)$ has
respectively $n$ and $n+1$ linearly independent eigenfunctions corresponding
to the eigenvalue $\lambda,$ where $n=s_{1}+s_{2}+...+s_{l}.$ Hence, it
follows from $(b)$ that the operator $L_{t}(q)$ has a root function $\varphi$
such that $\varphi\in H(n_{l},j)$ but $\varphi\notin H(n_{l}),$ where
$H(n_{l},j)$ is the orthogonal sum of $H(n_{l})$ and the one dimensional space
generated by $e^{i\left\langle b_{j,l}+t,x\right\rangle }.$ It mean that the
projection of $\varphi$ onto the one dimensional space is nonzero, that is,
$(\varphi,e^{i\left\langle b_{l,j}+t,x\right\rangle })$ is not zero.
Therefore, without loss of generality, the latter number can be assumed to be
$1$.
\end{proof}

Now we find the necessary and sufficient condition on the potential $q$ for
which $\varphi_{l,j}$ is an eigenfunction. We will say that $\varphi$ is the
$l$-th associated function of $L_{t}(q)$ if%
\[
\left(  L_{t}(q)-\lambda I\right)  ^{l}\varphi\neq0\text{ }\And\left(
L_{t}(q)-\lambda I\right)  ^{l+1}\varphi=0.
\]
In other words, $\varphi$ is called the first associated functions if
\begin{equation}
\left(  L_{t}(q)-\lambda I\right)  \varphi=\Psi
\end{equation}
and $\Psi$ is an eigenfunction. If (61) holds and $\Psi$ is an $(l-1)$-th
associated function then we say that $\varphi$ is an $l$-th associated function.

\begin{theorem}
$(a)$ The functions $\varphi_{1,j}$ for $j=1,2,...s$ defined in (60) are the
eigenfunctions of the operator $L_{t}(q).$

$(b)$ If $l>1,$ then the functions $\varphi_{l,j}$ for $j=1,2,...s_{l}$ are
either the eigenfunctions or the $n$-th associated functions of $L_{t}(q),$
where $n<l$
\end{theorem}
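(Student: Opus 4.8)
The plan is to exploit two structural facts: the one-sided nature of $q\in S(k+)$, which forces multiplication by $q$ to shift Fourier modes strictly upward in the $v_{k}$-direction, and the $\varepsilon$-independent dimension count of Theorem 7(b). Writing $L_{t}(q)=L_{t}(0)+q$ and recalling from (60) that $\varphi_{l,j}=e^{i\left\langle b_{l,j}+t,x\right\rangle }+\psi$ with $\psi\in H(n_{l})$, I would first record the key shift lemma: $(L_{t}(q)-\lambda I)\varphi_{l,j}\in H(n_{l})$. Indeed $(L_{t}(0)-\lambda I)e^{i\left\langle b_{l,j}+t,x\right\rangle }=0$ because $\left\vert b_{l,j}+t\right\vert ^{2}=\lambda$; the term $qe^{i\left\langle b_{l,j}+t,x\right\rangle }$ lies in $H(n_{l})$ since every $\gamma_{1}\in\Gamma(k+)$ raises the $v_{k}$-index by at least one; and $(L_{t}(q)-\lambda I)\psi\in H(n_{l})$ by the invariance of Theorem 7(a). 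Since $\varphi_{l,j}$ is a root function, $g_{l,j}:=(L_{t}(q)-\lambda I)\varphi_{l,j}$ is again a root function for $\lambda$, and by the shift lemma it lies in the $\lambda$-root subspace $R_{l}$ of the invariant space $H(n_{l})$.

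For part $(a)$ (the case $l=1$) I would observe that no $b_{i}$ lies strictly above the top plane $P(k,n_{1})$, so $\lambda$ is not an eigenvalue of $L_{t}(0)$ restricted to $H(n_{1})$. By Theorem 1(a) (equivalently the $\varepsilon$-continuity of Theorem 7(b)) the spectrum is unchanged, so $\lambda\notin\sigma(L_{t}(q)|_{H(n_{1})})$ and $R_{1}=\{0\}$. The shift lemma then forces $g_{1,j}=0$, i.e.\ $\varphi_{1,j}$ is an eigenfunction.

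For part $(b)$ I would argue by induction on $l$, with $l=1$ as the base case above. By Theorem 7(b) the dimension of $R_{l}$ equals its value at $\varepsilon=0$, namely $s_{1}+s_{2}+\cdots+s_{l-1}$, the number of $b_{i}$ lying strictly above $P(k,n_{l})$. The functions $\varphi_{l',j'}$ with $l'<l$ all lie in $H(n_{l})$ (their leading modes sit on planes of index $n_{l'}>n_{l}$), are root functions for $\lambda$, and number exactly $s_{1}+\cdots+s_{l-1}$; they are linearly independent because each $\varphi_{l',j'}$ is supported on planes of index $\geq n_{l'}$ with leading coefficient $1$ on the distinct mode $b_{l',j'}$, so projecting a vanishing combination onto its lowest occupied plane forces all coefficients to vanish. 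Hence $\{\varphi_{l',j'}:l'<l\}$ is a basis of $R_{l}$. By the induction hypothesis each $\varphi_{l',j'}$ is annihilated by $(L_{t}(q)-\lambda I)^{l'}$, and since $l'\leq l-1$ it is annihilated by $(L_{t}(q)-\lambda I)^{l-1}$; thus $(L_{t}(q)-\lambda I)^{l-1}$ kills all of $R_{l}$. Applying this to $g_{l,j}\in R_{l}$ yields $(L_{t}(q)-\lambda I)^{l}\varphi_{l,j}=0$. Therefore either $g_{l,j}=0$ and $\varphi_{l,j}$ is an eigenfunction, or $g_{l,j}\neq0$ and $\varphi_{l,j}$ is an $n$-th associated function with $n\leq l-1<l$, completing the induction.

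The main obstacle will be the bookkeeping in part $(b)$: pinning down $\dim R_{l}=s_{1}+\cdots+s_{l-1}$ through the $\varepsilon$-independence of Theorem 7(b), verifying that the lower-plane root functions $\varphi_{l',j'}$ exhaust $R_{l}$ (both the linear independence via the triangular plane structure and the matching count), and establishing the uniform nilpotency bound $(L_{t}(q)-\lambda I)^{l-1}|_{R_{l}}=0$. Once these are in place, the shift lemma reduces the whole assertion to a one-line estimate on the order of annihilation.
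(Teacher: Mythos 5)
Your argument is correct, and for part $(b)$ it follows the same route as the paper: the paper likewise records that $\left(-\Delta+(q-\lambda)I\right)\varphi_{l,j}\in H(n_{l})$ and runs an induction on $l$, but it only asserts that $H(n_{2})$ contains no associated functions; your explicit identification of the root subspace $R_{l}$ of $L_{t}(q)$ in $H(n_{l})$ as the span of the $\varphi_{l',j'}$ with $l'<l$ --- via the $\varepsilon$-independent count $\dim R_{l}=s_{1}+\cdots+s_{l-1}$, the triangular-support independence argument, and the resulting nilpotency bound $\left(L_{t}(q)-\lambda I\right)^{l-1}R_{l}=0$ --- is precisely the bookkeeping the paper leaves implicit, and it is what makes the induction close. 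For part $(a)$ you take a genuinely different route: the paper deduces it from Theorem 3, i.e.\ from the explicitly constructed series eigenfunctions $\Psi_{b_{j}+t}$ of (21), which must coincide with $\varphi_{1,j}$ because the root subspace in $H(n_{1},j)$ is one-dimensional; you instead obtain it as the degenerate case $R_{1}=\{0\}$ of your dimension count together with the shift lemma. Your version is more self-contained (it never invokes the convergence of the series (21)), while the paper's version has the side benefit of exhibiting the eigenfunction explicitly. One cosmetic point: the invariance and $\varepsilon$-independence you rely on are Theorem 6$(a)$,$(b)$ of the paper, not Theorem 7.
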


\begin{proof}
Statement $(a)$ follows from Theorem 3. To prove $(b)$ let us first consider
the case $l=2.$ Using the relations $\varphi_{2,j}\in H(n_{2},j)$ and $q\in
S(k+)$ one can readily see that $\left(  -\Delta+\left(  q-\lambda\right)
I\right)  \varphi_{2,j}\in H(n_{2})$. On the other hand, it follows from $(a)$
that $H(n_{2})$ does not contain \ the associated functions. Therefore
$\left(  -\Delta+\left(  q-\lambda\right)  I\right)  \varphi_{2,j}$ is either
eigenfunction or zero. It means that $\varphi_{2,j}$ is either the
eigenfunction or the first associated functions. Similarly $\left(
-\Delta+\left(  q-\lambda\right)  I\right)  \varphi_{l,j}\in H(n_{l})$.
Therefore by using the induction method we obtain that $\varphi_{l,j}$ is
either eigenfunction or the $n$-th associated functions, where $n<l$
\end{proof}

Now we will consider in detail the root functions $\varphi_{2,j}$
corresponding to the vectors of the second plane. By Theorem 7$(b)$ the root
functions corresponding to the vectors of the second plane are either
eigenfunctions or the first associated functions. For the simplicity of the
notations we omit the indices and denote by $\gamma$ the arbitrary vector from
$\left\{  b_{2,j}:j=1,2,...,s_{2}\right\}  $ and by $\varphi$ the
corresponding root function. Thus $\gamma=\delta+n_{2}v_{k},$ $\delta\in
\Gamma(k)$ and $\varphi$ is an eigenfunction corresponding to the eigenvalue
$\lambda=\mid\delta+n_{2}v_{k}+t\mid^{2}$ if and only if the following
equality holds
\begin{equation}
(\Delta+\lambda)\varphi=q\varphi,
\end{equation}
where $\lambda$ is defined in (15) and (59). Using the decompositions%
\[
\varphi(x)=e^{i\left\langle \delta+n_{2}v_{k}+t,x\right\rangle }+%
{\textstyle\sum\limits_{n=n_{2}+1}^{\infty}}
\left(
{\textstyle\sum\limits_{a\in\Gamma(k)}}
c(a,n)e^{i\left\langle a+nv_{k}+t,x\right\rangle }\right)
\]
(see (60)) and taking into account that $q\in\Gamma(k+),$ that is,%
\[
q(x)=%
{\textstyle\sum\limits_{m=1}^{\infty}}
\sum_{u\in\Gamma(k)}q_{u+mv_{k}}e^{i\left\langle u+mv_{k},x\right\rangle }%
\]
we see that (62) holds if and only if the following system of equations holds%
\begin{equation}
(\lambda-\mid a+nv_{k}+t\mid^{2})c(a,n)=q_{a-\delta+\left(  n-n_{2}\right)
v_{k}}+%
{\textstyle\sum\limits_{m=1}^{\infty}}
\left(
{\textstyle\sum\limits_{u\in\Gamma(k)}}
c(a-u,n-m)q_{u+mv_{k}}\right)
\end{equation}
for all $n\geq n_{2}+1$ and $a\in\Gamma(k).$ Now in (63) instead of $n$ taking
$n_{2}+1,n_{2}+2,....,n_{1}-1,n_{1}$ and taking into account that
$c(a-u,n-m)=0$ for $n-m<n_{2}+1$ we obtain%

\begin{align}
(\lambda- &  \mid a+\left(  n_{2}+1\right)  v_{k}+t\mid^{2})c(a,n_{2}%
+1)=q_{a-\delta+v_{k}},\nonumber\\
(\lambda- &  \mid a+\left(  n_{2}+2\right)  v_{k}+t\mid^{2})c(a,n_{2}%
+2)=q_{a-\delta+2v_{k}}+%
{\textstyle\sum\limits_{u\in\Gamma(k)}}
c(a-u,n_{2}+1)q_{u+v_{k}},\nonumber\\
&  \cdot\cdot\cdot\cdot\cdot\cdot\cdot\cdot\cdot\cdot\cdot\cdot\cdot\cdot
\cdot\cdot\cdot\cdot\cdot\cdot\\
(\lambda- &  \mid a+\left(  n_{1}-1\right)  v_{k}+t\mid^{2})c(a,n_{1}%
-1)=q_{a-\delta+\left(  n_{1}-1-n_{2}\right)  v_{k}}+\nonumber\\
&
{\textstyle\sum\limits_{m=1}^{n_{1}-n_{2}-2}}
{\textstyle\sum\limits_{u\in\Gamma(k)}}
c(a-u,n_{1}-1-m)q_{u+mv_{k}},\nonumber\\
(\lambda- &  \mid a+n_{1}v_{k}+t\mid^{2})c(a,n_{1})=q_{a-\delta+\left(
n_{1}-n_{2}\right)  v_{k}}\nonumber\\
&  +%
{\textstyle\sum\limits_{m=1}^{n_{1}-n_{2}-1}}
{\textstyle\sum\limits_{u\in\Gamma(k)}}
c(a-u,n_{1}-m)q_{u+mv_{k}}.\nonumber
\end{align}
From the first equation we express $c(a,n_{2}+1)$ for $a\in\Gamma(k)$ in term
of the Fourier coefficients of the potential. In the right-hand side of the
second equation only $c(a,n_{2}+1)$ for $a\in\Gamma(k)$ takes part. Therefore
from the second equation we express $c(a,n_{2}+2)$ for $a\in\Gamma(k)$ in term
of the Fourier coefficients of the potential. In this way, from the third,
forth.... and $\left(  n_{1}-n_{2}-1\right)  $-th equations we express
$c(a,n_{2}+3)$ $c(a,n_{2}+4),...$ and $c(a,n_{1}-1)$ for $a\in\Gamma(k)$ in
term of the Fourier coefficients of the potential. Thus the right hand side of
the last equation for fixed $u$ is a polynomial of the Fourier coefficients.
On the other hand, if $a=a_{i}$ for $i=1,2,...,s,$ then by (59) the left hand
side of the last equation of (64) is zero. Therefore we obtain the following
$s$ equalities%
\begin{equation}
q_{a_{i}-\delta+\left(  n_{1}-n_{2}\right)  v_{k}}+%
{\textstyle\sum\limits_{m=1}^{n_{1}-n_{2}-1}}
\left(
{\textstyle\sum\limits_{u\in\Gamma(k)}}
c(a_{i}-u,n_{1}-m)q_{u+mv_{k}}\right)  =0,\text{ }\forall i=1,2,...,s,
\end{equation}
where $c(a_{i}-u,n_{1}-m)$ is explicitly expressed by the Fourier coefficients
of the potential $q.$ The equalities (65) can be written in the form%
\begin{equation}%
{\textstyle\sum\limits_{u\in\Gamma(k)}}
Q(b_{i},u)=0,\text{ }\forall i=1,2,...,s,
\end{equation}
\ where $Q(b_{i},u)$ is the polynomial of the Fourier coefficient of the
potential $q$ and $b_{i}=a_{i}+n_{1}v_{k}$ (see Remark3) for $i=1,2,...,s$ are
the vectors defined by (15) and lying on the first plane $P(k,n_{1})$. Thus we have

\begin{theorem}
The function $\varphi_{2,j},$ defined in (60), is an eigenfunction if and only
if (66) holds.
\end{theorem}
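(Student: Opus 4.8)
The plan is to prove Theorem 8 by tracing exactly what it means for the root function $\varphi_{2,j}$ constructed in Theorem 6(c) to actually be an eigenfunction, using the infinite triangular system (63)--(64) that was already set up in the discussion preceding the statement. By Theorem 7(b) we already know $\varphi_{2,j}$ is either an eigenfunction or a first associated function, so the question is purely whether the eigenvalue equation $(\Delta+\lambda)\varphi=q\varphi$ of (62) can be satisfied. The key observation, established in the preamble, is that (62) is equivalent to the countable system (63), where the coefficient $(\lambda-\mid a+nv_k+t\mid^2)$ multiplying $c(a,n)$ on the left is nonzero for every $n$ strictly between $n_2+1$ and $n_1-1$ (since those planes carry no vector realizing the eigenvalue $\lambda$), but vanishes precisely when $a+n_1v_k+t$ has length $\sqrt{\lambda}$, i.e.\ when $a=a_i$ for some $i=1,2,\dots,s$ by (59).

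First I would exploit the triangular structure. For each fixed $a\in\Gamma(k)$, equation (63) expresses $c(a,n)$ in terms of coefficients $c(a-u,n-m)$ with $n-m<n$, that is, coefficients living on strictly lower planes (recall $q_{u+mv_k}$ forces $m\geq1$). Because $c(a-u,n-m)=0$ for $n-m<n_2+1$, the recursion bottoms out: the first equation of (64) determines every $c(a,n_2+1)$ outright as $q_{a-\delta+v_k}/(\lambda-\mid a+(n_2+1)v_k+t\mid^2)$, and each successive equation for $n=n_2+2,\dots,n_1-1$ determines the $c(a,n)$ on that plane from data already computed on lower planes, with no obstruction because the left-hand denominators are nonzero there. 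Hence all coefficients up through the $(n_1-1)$-th plane are uniquely and explicitly determined as polynomials in the Fourier coefficients of $q$.

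Next I would confront the $n=n_1$ row of (64), which is where the only possible obstruction lives. Here the left-hand multiplier $(\lambda-\mid a+n_1v_k+t\mid^2)$ vanishes exactly for $a=a_i$, $i=1,\dots,s$. For those $s$ values the equation degenerates to $0=(\text{right-hand side})$, and since the right-hand side is built entirely from the already-determined lower-plane coefficients, it is a fixed polynomial expression in the $q_\gamma$. This yields precisely the $s$ compatibility conditions (65), which are rewritten abstractly as (66) with $Q(b_i,u)$ the corresponding polynomial in the Fourier coefficients. Thus (66) is exactly the solvability condition: if it holds, the degenerate rows are automatically satisfied, the remaining coefficients $c(a,n)$ for $a\neq a_i$ and for all $n>n_1$ can be continued by the same recursion (denominators nonzero on those higher planes), and one obtains a genuine solution of (62), so $\varphi_{2,j}$ is an eigenfunction; conversely, if $\varphi_{2,j}$ is an eigenfunction then (62) holds, forcing the degenerate rows and hence (66).

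The main obstacle I anticipate is the bookkeeping that legitimizes calling the derived objects genuine \emph{polynomials} in the $q_\gamma$ and confirming that the recursion truly terminates on each plane, i.e.\ that every term feeding the $n_1$-th row descends from the finitely many planes between $n_2+1$ and $n_1-1$. This rests on the one-sidedness of $S(k+)$ (all $q_{u+mv_k}$ have $m\geq1$) together with the strict inequality $n_1>n_2$ from (17), so the sums over $m$ in (64) are genuinely finite; making that finiteness airtight, and checking that the higher-plane continuation for $n>n_1$ introduces no further constraints because those denominators never vanish again, is the only delicate point. Everything else follows mechanically from the linear-algebra structure of the triangular system, so I would keep the exposition at the level of identifying (66) as the solvability condition rather than re-deriving the explicit form of each $Q(b_i,u)$.
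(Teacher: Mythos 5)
Your ``only if'' direction is sound and is essentially the paper's own derivation: the eigenvalue equation (62) is equivalent to the triangular system (63), the rows $n=n_2+1,\dots,n_1-1$ determine the $c(a,n)$ as explicit polynomials in the Fourier coefficients because the multipliers $\lambda-\mid a+nv_k+t\mid^2$ are nonzero on those intermediate planes, and the degenerate rows $n=n_1$, $a=a_i$ then force (65), i.e.\ (66).

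The gap is in the ``if'' direction. You argue that when (66) holds the degenerate rows are satisfied, the recursion can be continued past the plane $n_1$, and ``one obtains a genuine solution of (62), so $\varphi_{2,j}$ is an eigenfunction.'' Two things are missing there. First, solving the formal system does not by itself produce an eigenfunction: you must show the resulting infinite series converges to an element of $L_2(F)$ lying in the domain of $L_t(q)$ (the kind of estimate carried out for (19) in Theorem~3), and you never address this; note also that $c(a_i,n_1)$ is left undetermined by the $0=0$ rows, so your continuation is not even canonical. Second, and more importantly, even granting a genuine solution of (62) with leading term $e^{i\left\langle b_{2,j}+t,x\right\rangle}$, that does not show that the \emph{specific} root function $\varphi_{2,j}$ of Theorem~6(c) is an eigenfunction rather than a first associated function; you invoke the dichotomy of Theorem~7(b) at the outset but never use the associated-function alternative. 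The paper closes exactly this gap by contraposition: if $\varphi_{2,j}$ is not an eigenfunction it satisfies (67) with $\Psi=\sum_{i\in E}c_i\varphi_{1,i}$, $c_i\neq0$; since each $\varphi_{1,i}\in H(n_1,i)$, the extra term $(\Psi,e^{i\left\langle a+n_1v_k+t,x\right\rangle})$ appears only in the $n_1$-th row, where it equals $c_i$ for $a=a_i$, so the left side of (65) equals $c_i\neq0$ and (66) fails. That argument needs no convergence and no identification step, because it works directly with the already-constructed $\varphi_{2,j}$. You should replace your direct construction by this contrapositive, or else supply the convergence estimate and a uniqueness argument ruling out the coexistence of your constructed eigenfunction with an associated function of the form (60).
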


\begin{proof}
We have proved that if $\varphi_{2,j}$ is an eigenfunction, then (66). holds.
Now suppose that $\varphi_{2,j}$ is not eigenfunction. Then, by Theorem 7, it
is first associated function and hence
\begin{equation}
(\Delta+\mid b_{2,j}+t\mid^{2})\varphi_{2,j}=q\varphi_{2,l}+\Psi,
\end{equation}
where $\Psi\in span\{$ $\varphi_{1,j}:j=1,2,...,s\}.$ It means that
\[
\Psi=%
{\textstyle\sum\limits_{i\in E}}
c_{i}\varphi_{1,i},
\]
where $E$ is an nonempty subset of $\{$ $1,2,...,s\}$ and $c_{i}\neq0$ for all
$i\in E.$ Instead of (62) using (67) arguing as in the proof of (64) and
taking into account that $\varphi_{1,i}\in H(n_{1},i)$, we get the system of
equations whose first $n_{1}-n_{2}-1$ equations coincides with the first
$n_{1}-n_{2}-1$ equations of (64) and the last equation has the form%
\begin{align}
(\lambda- &  \mid a+n_{1}v_{k}+t\mid^{2})c(a,n_{1})=q_{a-\delta+\left(
n_{1}-n_{2}\right)  v_{k}}+\nonumber\\
&
{\textstyle\sum\limits_{m=1}^{n_{1}-n_{2}-1}}
{\textstyle\sum\limits_{u\in\Gamma(k)}}
c(a-u,n-m)q_{u+mv_{k}}+(\Psi,e^{i(a+n_{1}v_{k}}).
\end{align}
As in the case (64) the terms $c(a-u,n-m)$ in the right hand side of (68) are
obtained from the first $n_{1}-n_{2}-1$ equations of (64). Therefore arguing
as in the proof of (65) we get
\[
q_{a_{i}-\delta+\left(  n_{1}-n_{2}\right)  v_{k}}+%
{\textstyle\sum\limits_{m=1}^{n_{1}-n_{2}-1}}
{\textstyle\sum\limits_{u\in\Gamma(k)}}
c(a_{i}-u,n_{1}-m)q_{u+mv_{k}}=c_{i}\neq0,\text{ }\forall i\in E,
\]
and hence equality (66) does not hold for $i\in E.$
\end{proof}

The investigation of the root functions $\varphi_{l,j}$ corresponding to the
vectors of the $l$-th plane for $l>2$ can be considered in the similar way.
However, the multiplicity of the large eigenvalues of the multidimensional
Schrodinder operators $L_{t}(q)$ is very large number. For example if the
period lattice of the potential $q$ is $2\pi\mathbb{Z}^{d}$ then the
multiplicity of the eigenvalue $\left\vert \gamma\right\vert ^{2},$ where
$\gamma\in\mathbb{Z}^{d},$ is of order $\left\vert \gamma\right\vert ^{d-2}$
\ and hence approaches infinity as $\left\vert \gamma\right\vert
\rightarrow\infty.$ Therefore the detailed investigation of the root functions
$\varphi_{l,j}$ for all $l$ is technically very complicated. In order to avoid
eclipsing the essence of this paper by the technical details we consider only
the case $l=2$. Moreover, in the one-dimensional case there are only two root
function corresponding to the double eigenvalues $\left(  2\pi n\right)  ^{2}$
($n\neq0)$ and $\left(  2\pi n+\pi\right)  ^{2}$ and hence we do not need to
consider the root functions corresponding to the vectors lying in $l$-th
planes for $l>2.$ The one dimensional case for the potential $q$ from
$L_{2}[0,1]$ is investigated in [7,8]. The case $q(x)=Ae^{2\pi irx},$ where
$A\in\mathbb{C}$ and $r\in\mathbb{Z}$, was investigated in detail in [9].

Now we consider the more general case $q\in L_{1}[0,1]$ by using some formulas
obtained in the papers [12,13]. In [12] we investigated the one dimensional
operators $L_{t}(q)$ for $t\neq0,\pi$ corresponding to the boundary value
problems (12) for the potential $q$ satisfying (11). Let us consider the case
$t=0.$ The case $t=\pi$ is similar. Since for $q\in L_{1}[0,1]$ Fourier
decomposition (9) does not hold we can not immediately use Theorem 8 and
Theorem 4.2 of the paper [8]. Therefore we consider this case by using the
results of the papers [12, 13]. Namely, we use the followings. In Theorem 1 of
[12] we proved that $\left(  2\pi n\right)  ^{2}$ for $n\neq0$ is the double
eigenvalue of $L_{0}(q).$ Instead of the decomposition (9) we use the formula
\begin{equation}
((2\pi n)^{2}-(2\pi(n+p))^{2})(\Psi,e^{i2\pi(n+p)x})=\sum_{m\in\mathbb{N}%
}q_{m}(\Psi,e^{i2\pi(n+p-m)x})
\end{equation}
which was proved in [13,12] (see Lemma 1 of [13] and formula (16) of [12]),
where $\Psi$ is an eigenfunction corresponding to the eigenvalue $(2\pi
n)^{2}$.

\begin{theorem}
Suppose (11) holds. Let $\Psi$ be an eigenfunction of $L_{0}(q)$ corresponding
to the eigenvalue $\left(  2\pi n\right)  ^{2},$ where $n\in\mathbb{N}$.

$(a)$ If the first of the numbers
\begin{equation}
(\Psi,e^{-i2\pi nx}),\text{ }(\Psi,e^{i2\pi nx})
\end{equation}
is not zero, then $\Psi$ has the form \textit{ }%
\begin{equation}
\text{ }e^{-i2\pi nx}+\sum_{p\in\mathbb{N}}c_{p}e^{i2\pi(-n+p)x}%
\end{equation}
If the first of the numbers in (70) is zero and the second is not zero, then
$\Psi$ has the form \textit{ }%
\begin{equation}
\text{ }e^{i2\pi nx}+\sum_{p\in\mathbb{N}}d_{p}e^{i2\pi(n+p)x}%
\end{equation}
If both of the numbers in (70) are zero then $\Psi$ is the zero function

$(b)$ If the geometric multiplicity of the eigenvalue $(2\pi n)^{2}$ is two,
then there exist linearly independent eigenfunctions $\Psi_{-n}$ and $\Psi
_{n}$ having the forms (71) and (72) respectively.

$(c)$ \ If the geometric multiplicity of the eigenvalue $(2\pi n)^{2}$ is one,
then the operator $L_{0}(q)$ has an eigenfunction $\Psi$ of the form (72) and
an associated function $\Phi$ corresponding to the eigenfunction $c\Psi,$
where $c$ is a nonzero constant, of the form (71).
\end{theorem}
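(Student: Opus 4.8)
The plan is to analyze the one-dimensional operator $L_0(q)$ acting on eigenfunctions via the recursion (69), exploiting that the potential only has positive Fourier modes (by (11)) so that the formula relates a Fourier coefficient $(\Psi,e^{i2\pi(n+p)x})$ to coefficients at indices strictly below $n+p$. The key structural fact is that the eigenvalue $(2\pi n)^2$ is double, with the two ``free'' modes being $e^{-i2\pi nx}$ and $e^{i2\pi nx}$, since $(2\pi n)^2 = (2\pi(-n))^2$. For a general index $n+p$ the coefficient $(2\pi n)^2 - (2\pi(n+p))^2$ in (69) vanishes only when $n+p = \pm n$, i.e. $p=0$ or $p=-2n$. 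This is what singles out the two special modes and lets me solve for all other coefficients recursively.

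First I would prove part $(a)$. Suppose $(\Psi, e^{-i2\pi nx})\neq 0$. I would show that every coefficient $(\Psi, e^{i2\pi(n+p)x})$ with $n+p < -n$ (that is, strictly to the left of the mode $-n$) vanishes: this follows by iterating (69) downward exactly as in the proof of Theorem 2 and formula (51), using that the Fourier modes of $q$ are all positive so the right-hand side of (69) only involves coefficients at smaller indices, and the denominator stays bounded away from zero for indices far to the left, forcing those coefficients to zero in the limit. Normalizing $(\Psi,e^{-i2\pi nx})=1$, this gives the form (71) with the expansion running over $p\in\mathbb{N}$. For the second assertion, if $(\Psi,e^{-i2\pi nx})=0$ but $(\Psi,e^{i2\pi nx})\neq 0$, the same downward iteration now kills every coefficient with index $n+p < n$, yielding (72). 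Finally, if both numbers in (70) vanish, then iterating (69) shows every coefficient to the left of $-n$ is zero, and then every coefficient strictly between $-n$ and the support of $\Psi$ is forced to zero as well, so $\Psi\equiv 0$; this is the analogue of the contradiction argument closing the proof of Theorem 2.

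For part $(b)$, assuming geometric multiplicity two, the eigenspace is two-dimensional; I would pick any eigenfunction $\Psi$ with $(\Psi,e^{-i2\pi nx})\neq 0$, which by $(a)$ has the form (71), and call it $\Psi_{-n}$. Since the eigenspace has dimension two, there is an eigenfunction linearly independent from $\Psi_{-n}$; subtracting a suitable multiple of $\Psi_{-n}$ I can arrange its $e^{-i2\pi nx}$-coefficient to be zero while keeping it nonzero, hence (by $(a)$) its $e^{i2\pi nx}$-coefficient must be nonzero, so it has the form (72); call it $\Psi_n$. Their distinct leading modes guarantee linear independence.

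For part $(c)$, with geometric multiplicity one, there is (up to scalar) a single eigenfunction $\Psi$. I would argue it must have the form (72): if instead it had form (71), then because the multiplicity (in the sense of the multiple-eigenvalue structure of $L_t(0)$, see (15)--(17) specialized to $d=1$) forces an associated function to exist, and the invariance/projection mechanism of Theorem 6 produces a root function with nonzero $e^{i2\pi nx}$-component, one obtains a second eigenfunction of type (72), contradicting geometric multiplicity one; hence $\Psi$ has the form (72). The algebraic multiplicity being two then guarantees a genuine associated function $\Phi$ satisfying $(\Delta+(2\pi n)^2)\Phi = q\Phi - c\Psi$ for some $c\neq 0$, and repeating the downward iteration on $\Phi$ (the inhomogeneous term $c\Psi$ supported at indices $\geq n$ does not interfere with the coefficients left of $-n$) shows $\Phi$ has the form (71) after normalization. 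The main obstacle is justifying the convergence and the vanishing of the tail when iterating (69) in the $L_1$ setting, where the clean geometric decay of (38) is unavailable; I expect this to be handled exactly as indicated in the text by invoking the estimates of Lemma 1 of [13] and formula (16) of [12] rather than reproving them, so the real work is the bookkeeping of which coefficients survive, driven entirely by the one-sided Fourier support condition (11).
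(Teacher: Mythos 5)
Your parts $(a)$ and $(b)$ follow the paper's route: iterate (69) downward using the one-sided support of the $q_m$ to kill all Fourier coefficients below the relevant threshold, then in $(b)$ play the two forms (71), (72) against linear independence. That is fine. The problem is part $(c)$, where your argument deviates from the paper's and the deviation does not work. To show that the unique eigenfunction $\Psi$ has the form (72), you invoke the invariance/projection mechanism of Theorem 6 to manufacture "a second eigenfunction of type (72)." Two objections: first, the paper explicitly states that for $q\in L_{1}[0,1]$ the decomposition (9) and hence the $S(k+)$ machinery (Theorems 6--8, condition (4)) cannot be used directly -- this whole passage exists precisely to replace that machinery by the recursion (69)/(74); second, even granting the mechanism, it only produces a \emph{root} function with nonzero $e^{i2\pi nx}$-component, not an eigenfunction, so there is no contradiction with geometric multiplicity one. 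The paper's actual argument is elementary and stays inside the recursion: having established $(\Phi,e^{i2\pi(n+p)x})=0$ for $p<-2n$, it evaluates the inhomogeneous relation (74) at $p=-2n$, where the left-hand side and the $q$-sum both vanish, forcing $c(\Psi,e^{-i2\pi nx})=0$ and hence, by $(a)$, the form (72) for $\Psi$.

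A second gap in your $(c)$: you assert $\Phi$ "has the form (71) after normalization," but your iteration only shows the coefficients of $\Phi$ vanish for indices $<-n$; you never verify $(\Phi,e^{-i2\pi nx})\neq0$, without which $\Phi$ cannot be normalized to (71). The paper closes this by contradiction: if $(\Phi,e^{-i2\pi nx})=0$, the same upward induction gives $(\Phi,e^{i2\pi(n+p)x})=0$ for all $p<0$, and then (74) at $p=0$ forces $c(\Psi,e^{i2\pi nx})=0$, contradicting that $\Psi$ is a nonzero eigenfunction of the form (72). You need both of these steps to complete $(c)$.
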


\begin{proof}
$(a)$ Iterating (69) as was done in the proof of Theorem 2 in [12], we see
that
\begin{equation}
(\Psi,e^{i2\pi(n+p)x})=0
\end{equation}
for all $p<-2n.$ Therefore if the first of the numbers in (70) is not zero
then $\Psi$ has the form (71). In the same way we conclude that if the first
of the numbers in (70) is zero and the second is not zero then (73) holds for
$p<0$ and $\Psi$ has the form (72). These argument also shows that if both of
the numbers in (70) are zero then (73) holds for all $p\in\mathbb{Z}$.

$(b)$ If both linearly independent eigenfunctions has the form (71), then some
multiple of their difference has the form (72), since there exist three cases:
case (71), case (72) and zero function. If both linearly independent
eigenfunctions has the form (72), then their difference is zero that
contradicts to the independence.

$(c)$ By the definition of the associated function we have $\left(
L_{0}(q)-(2\pi n)^{2}\right)  \Phi=c\Psi,$ where $c$ is a nonzero number.
Multiplying both sides by $e^{i2\pi(n+p)x}$ and then arguing as in the proof
of (69) we obtain
\begin{equation}
(2\pi n)^{2}-(2\pi(n+p))^{2})(\Phi,e^{i2\pi(n+p)x})=\sum_{m\in\mathbb{N}}%
q_{m}(\Phi,e^{i2\pi(n+p-m)x})+c(\Psi,e^{i2\pi(n+p)x})
\end{equation}
If $p<-2n$ then as we noted in $(a)$ the second term in the right side of (74)
is zero. Therefore iterating (74) we obtain that
\begin{equation}
(\Phi,e^{i2\pi(n+p)x})=0
\end{equation}
for all $p<-2n.$ Now let $p=-2n.$ Then the left side of (74) is zero and by
(75) the first term in the right side of (74) is also zero. It implies that
$(\Psi,e^{-i2\pi nx})=0$ and hence by (a) $\Psi$ has the form (72). It remains
to show that $\Phi$ has the form (71), that is, (75) is not true for $p=-2n.$
If (75) is true for $p=-2n,$ then arguing as above we obtain that (75) holds
for $p<0.$ It with (74) for $p=0$ implies that $(\Psi,e^{i2\pi nx})=0$ which
is a contradiction
\end{proof}

Let (71) be the eigenfunction $\Psi$. Then in (69) replacing $p$ and $\Psi$ by
$0$ and (71) and taking into account that $q_{m}=0$ for $m\leq0$ we obtain%
\begin{equation}
q_{2n}+%
{\textstyle\sum\limits_{p=1}^{2n-1}}
c_{p}q_{2n-p}=0,
\end{equation}
where $c_{p}=(\Psi,e^{i2\pi(-n+p)x}).$ In (69) replacing $n$ with $-n$ and
then using the equalities $c_{0}=1$ and $c_{p}=0$ for $p<0$ we obtain
\[
((2\pi n)^{2}-(2\pi(-n+p))^{2})c_{p}=q_{1}c_{p-1}+q_{2}c_{p-2}+...+q_{p-1}%
c_{1}+q_{p}%
\]
Iterating it we get
\begin{equation}
c_{p}=\frac{1}{4\pi^{2}p(2n-p)}\left(  q_{p}+\sum_{k=1}^{p-1}\sum_{n_{1}%
,n_{2},...,n_{k}}\frac{q_{n_{1}}q_{n_{2}}...q_{n_{k}}q_{p-n(k)}}%
{b(n,p,k)}\right)  ,
\end{equation}
where $n(k)=n_{1}+n_{2}+...+n_{k},$ $\{n_{1},n_{2},...,n_{k},p-n(k)\}\subset
\mathbb{N}$ and
\[
b(n,p,k)=%
{\textstyle\prod\limits_{s=1}^{k}}
(4\pi^{2}(p-n(s))(2n-p+n(s))).
\]
Therefore the equalities (76) and (77) give us the equality
\begin{equation}
q_{2n}+%
{\textstyle\sum\limits_{p=1}^{2n-1}}
\frac{q_{2n-p}}{4\pi^{2}p(2n-p)}\left(  q_{p}+\sum_{k=1}^{p-1}\sum
_{n_{1},n_{2}...,n_{k}}\frac{q_{n_{1}}q_{n_{1}}...q_{n_{k}}q_{p-n(k)}%
}{b(n,p,k)}\right)  =0.
\end{equation}

Now let (71) and (72) be the associated function $\Phi$ and eigenfunction
$\Psi$ respectively. Instead of (69) using (74) and repeating the above
argument by taking into account that $(\Psi,e^{i2\pi nx})=1$ and
$(\Psi,e^{i2\pi(n+p)x})=0$ for $p<0,$ we get the equality obtained from (78)
by replacing $0$ with $-c,$ where $c\neq0$. Thus we have proved the following.

\begin{theorem}
Suppose the conditions in (11) hold. Then the geometric multiplicity of the
eigenvalue $\left(  2\pi n\right)  ^{2}$ for $n\neq0$ of the operator
$L_{0}(q)$ is two if and only if (78) holds. The similar result holds for the
eigenvalue $\left(  2\pi n+\pi\right)  ^{2}$ of $L_{\pi}(q).$
\end{theorem}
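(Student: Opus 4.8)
The plan is to read Theorem 10 as the scalar translation of the structural dichotomy already established in Theorem 9, using the two explicit iterations carried out immediately above its statement. First I would note that, by Theorem 1 of [12], $(2\pi n)^{2}$ is a double eigenvalue of $L_{0}(q)$, so its geometric multiplicity is either one or two and the content of the theorem is to decide which. By Theorem 9 this is controlled entirely by whether the candidate function of the form (71) is a genuine eigenfunction: part (b) says that geometric multiplicity two yields linearly independent eigenfunctions of the forms (71) and (72), while part (c) says that geometric multiplicity one forces the function of the form (71) to be an associated function $\Phi$ (with the eigenfunction of the form (72)). Hence geometric multiplicity two holds if and only if the function of the form (71) is an eigenfunction.

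Next I would convert ``the function (71) is an eigenfunction'' into condition (78). The forward direction is exactly the computation producing (76): putting $p=0$ in (69) and using $q_{m}=0$ for $m\leq0$ gives $q_{2n}+\sum_{p=1}^{2n-1}c_{p}q_{2n-p}=0$, where $c_{p}=(\Psi,e^{i2\pi(-n+p)x})$; substituting the closed form (77) for the $c_{p}$ then yields (78). For the converse I would argue contrapositively through Theorem 9(c): if the function of the form (71) is the associated function $\Phi$ rather than an eigenfunction, then replacing (69) by (74) and repeating the same iteration produces the equation obtained from (78) by replacing its right-hand side $0$ with $-c$ for some $c\neq0$, so (78) fails.

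The one point requiring care is that the coefficients $c_{p}$ feeding into (78) are the same in the eigenfunction and associated-function computations. This holds because for the indices $1\leq p\leq 2n-1$ the extra inhomogeneous term $c(\Psi,e^{i2\pi(-n+p)x})$ in (74) vanishes: the eigenfunction $\Psi$ there has the form (72) and hence carries no Fourier modes at frequencies strictly below $n$. Consequently the coefficients $c_{p}$ of $\Phi$ satisfy the very same recurrence, and therefore the same formula (77), as in the eigenfunction case, and only the terminal $p=0$ relation picks up the additional $-c$. I expect this bookkeeping to be the only delicate step; everything else is a direct appeal to already-proved formulas.

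Finally I would assemble the equivalence: geometric multiplicity two implies (by Theorem 9(b) and the eigenfunction computation) that (78) holds, while (78) implies (by the associated-function computation) that the function of the form (71) cannot be an associated function, so by the Theorem 9 dichotomy the geometric multiplicity must be two. The corresponding statement for $(2\pi n+\pi)^{2}$ and $L_{\pi}(q)$ follows verbatim after the shift $t=0\mapsto t=\pi$, since (11), the iteration formula (69), and Theorem 9 all have exact antiperiodic analogues.
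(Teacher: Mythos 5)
Your proposal is correct and follows essentially the same route as the paper: the dichotomy from Theorem 9(b),(c) reduces the question to whether the form-(71) function is an eigenfunction or an associated function, the eigenfunction case gives (76)--(77) and hence (78), and the associated-function case via (74) gives (78) with $0$ replaced by $-c\neq 0$. Your explicit check that the inhomogeneous term $c(\Psi,e^{i2\pi(-n+p)x})$ vanishes for $1\leq p\leq 2n-1$ (so the $c_{p}$ obey the same recurrence in both cases) is exactly the point the paper compresses into ``repeating the above argument by taking into account that $(\Psi,e^{i2\pi nx})=1$ and $(\Psi,e^{i2\pi(n+p)x})=0$ for $p<0$.''
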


\end{document}